\newtheorem{theorem}{Theorem}[section]
\newtheorem{proposition}[theorem]{Proposition}
\newtheorem{corollary}[theorem]{Corollary}
\newtheorem{lemma}[theorem]{Lemma}
\newtheorem*{theorem*}{Theorem}
\theoremstyle{definition} 
\newtheorem*{remark}{Remark}
\newtheorem{definition}[theorem]{Definition}
\newtheorem*{acknowledgement}{Acknowledgment}
\numberwithin{equation}{section}
\newcommand{\abs}[1]{\left\lvert #1 \right \rvert}
\newcommand{\m}[1]{\mathbb{#1}}
\newcommand{\R}{\mathbb{R}}  % The real numbers.
\newcommand{\Prob}{\mathbb{P}}
\newcommand{\hC}{\widehat{\mathbb{C}}}
\def\g{\gamma}
\def\ii{\mathfrak{i}}
\def\SLE{\operatorname{SLE}}
\def\BL{\operatorname{BL}}
\def\Chat{\hat{\m{C}}}
 \newcommand{\splus}{{\scriptstyle +}}
\def\dd{\mathrm{d}}
\begin{document}

\title[Onsager--Machlup functional %and LDP 
for SLE loop measures]{Onsager--Machlup functional %and large deviations 
for $\text{SLE}_{\kappa}$ loop measures}

\author[Carfagnini]{Marco Carfagnini{$^{\dag }$}}
\address{ Department of Mathematics 
University of California, San Diego \\
La Jolla, CA 92093-0112,  U.S.A.}
\email{mcarfagnini@ucsd.edu}

\author[Wang]{Yilin Wang{$^{\dag }$}}
\address{Institut des Hautes Études Scientifiques (IHES)\\ 
Bures-sur-Yvette, 91440, France}
\email{yilin@ihes.fr}

\date{September 2023}

\begin{abstract}
We relate two ways to renormalize the Brownian loop measure on the Riemann sphere. One by considering the Brownian loop measure on the sphere minus a small disk, known as the normalized Brownian loop measure; the other by taking the measure on simple loops induced by the outer boundary of the Brownian loops, known as Werner's measure. This result allows us to interpret the Loewner energy as an Onsager--Machlup functional for SLE$_\kappa$ loop measure for any fixed $\kappa \in (0, 4]$, and more generally, for any Malliavin--Kontsevich--Suhov loop measure of the same central charge.
\end{abstract}

\maketitle

\section{Introduction}

Onsager--Machlup functionals were introduced in \cite{MachlupOnsager1953a, MachlupOnsager1953b} to determine the most probable path of a diffusion process and can be considered as a probabilistic analog of the Lagrangian of a dynamical system.  For instance, let $(B_{t})_{0\leqslant t \leqslant  1}$ be a standard real-valued Brownian motion and $\phi$ be a smooth real-valued curve starting at the origin. Then 
\begin{equation}\label{eqn.OM.functional}
    \lim_{\varepsilon \rightarrow 0} \frac{\Prob\left( \max_{0\leqslant t\leqslant 1} \vert B_{t} - \phi_{t} \vert <\varepsilon \right)}{\Prob \left( \max_{0\leqslant t\leqslant 1} \vert B_{t}  \vert <\varepsilon \right)} = \exp \left( -\mathcal{O} (\phi) \right),
\end{equation}
where $\mathcal{O}(\phi)$  is the Onsager--Machlup functional with respect to the sup-norm. For a Brownian motion  $\mathcal{O}(\phi)= \frac{1}{2}\int_{0}^{1} \phi^{\prime} (t)^{2} dt$ coincides with the Dirichlet energy of the curve $\phi$. %, and also the rate function for the LDP of the process $B_{t}$. 
One can consider \eqref{eqn.OM.functional} for different classes of stochastic processes, or smoothness of the curve $\phi$, and finally for tubes around the trajectories defined by different norms, see, \cite{Zeitouni1989, LyonsZeitouni1999,SheppZeitouni1993, ChaoDuan2019,carfagninigordina2023}. 
A simple change of variable gives for $\kappa > 0$,
\begin{equation}\label{eqn.OM.kappa.B}
    \lim_{\varepsilon \rightarrow 0} \frac{\Prob\left( \max_{0\leqslant t\leqslant 1} \vert \sqrt \kappa B_{t} - \phi_{t} \vert <\varepsilon \right)}{\Prob \left( \max_{0 \leqslant t\leqslant 1} \vert \sqrt \kappa B_{t}  \vert <\varepsilon \right)} = \exp \left( -  \frac{\mathcal{O} (\phi)}{\kappa} \right).
\end{equation}

Let $\nu^\kappa$ denote the law of $\sqrt \kappa B$ on the space $C_{0}([0,1], \R)$ of real-valued continuous functions starting from zero. Then \eqref{eqn.OM.kappa.B} can be restated as 
\begin{equation}\label{eqn.OM.functional2}
    \lim_{\varepsilon \rightarrow 0} \frac{\nu^\kappa \left( D_{\varepsilon} (\phi) \right)}{\nu^\kappa \left( D_{\varepsilon} (0) \right)} = \exp \left( -\frac{\mathcal{O} (\phi)}{\kappa} \right),
\end{equation}
where $D_{\varepsilon} (\phi)$ denotes the ball of radius $\varepsilon$ in $C_{0}([0,1], \R)$ with respect to the sup-norm centered at $\phi \in C_{0}([0,1], \R)$.

This work aims to identify the Onsager--Machlup functional for SLE$_\kappa$ loop measures. 
 SLE loop measure is a one-parameter family (indexed by $0<\kappa \le 4$) of infinite, $\sigma$-finite measures on simple loops in the Riemann sphere $\hC = \mathbb C \cup \{\infty\}$, which is moreover invariant under conformal automorphisms of $\hC$ (for $ 4 < \kappa < 8$ the SLE$_\kappa$ loop measure is supported on non-simple loops). It arises from scaling limits of critical lattice models and is constructed in \cite{Zhan2017} by Zhan as a natural loop analog of the chordal SLE$_\kappa$ curve connecting two boundary points of a simply connected domain by Schramm \cite{schramm2000scaling}. See also an earlier construction of the SLE$_{8/3}$ loop measure \cite{Werner2008} and that of the SLE$_2$ loop measure \cite{Benoist_loop}. The SLE loop measure exhibits more symmetries as loops are considered unparametrized and do not have any distinguished marked point (as opposed to the boundary points of chordal SLE). 

\bigskip

To state our main theorem, let us first identify the set of neighborhoods in the space of simple loops that we consider. 
Let $\gamma$ be an analytic loop such that $\gamma = f (S^{1})$ for some conformal map $f$ defined on $\mathbb{A}_{r} = \{z \in \mathbb C \,|\, r< |z| < 1/r\}$ for some $ 0 < r <1 $. For $0 < \varepsilon < 1-r$,  set $A_{\varepsilon}:= \mathbb A_{1-\varepsilon}$
%\{z\in \mathbb{C} : 1-\varepsilon < \vert z \vert < 1+\varepsilon  \}$
and let us consider the neighborhoods of $S^{1}$ and $\gamma$ given by 
\begin{align}
    & O_{\varepsilon} (S^{1} ) := \left\{ \text{non-contractible simple loops in } A_{\varepsilon} \right\}, \notag
    \\
    & O_{\varepsilon} (\gamma ) := \left\{ \text{non-contractible simple loops in } f(A_{\varepsilon}) \right\}.    \label{eqn.admissible.neigh}
\end{align}
We call the sets of simple loops of the form $O_{\varepsilon} (\gamma )$ as \emph{admissible neighborhoods}. We show that such neighborhoods are not scarce. In fact, every open set for the topology on the space of simple loops induced by the Hausdorff metric on $\hC$ is a union of such neighborhoods (Proposition~\ref{prop.basis.topology}). However, we cautiously note that these neighborhoods $O_{\varepsilon} (\gamma )$ are \emph{not} open sets for the Hausdorff metric as they do not contain the contractible loops that are Hausdorff-close to $\g$.

The following is our %first 
main theorem. %Let $\gamma$ be 

\begin{theorem}\label{thm.main}
Let $\kappa \le 4$ and $\mu^{\kappa}$ be the $\SLE_{\kappa}$ loop measure. For any analytic simple loop $\gamma$ and a collection of admissible neighborhoods $(O_{\varepsilon} (\gamma))_{0 <\varepsilon \ll 1}$ defined as above, we have that 
\begin{align}\label{eqn.main}
\lim_{\varepsilon \rightarrow 0} \frac{\mu^{\kappa}  \left( O_{\varepsilon} (\gamma )\right) }{\mu^{\kappa}  \left( O_{\varepsilon} (S^{1} )\right) } = \exp \left( \frac{c(\kappa)}{24}I^{L}(\gamma) \right),
\end{align}
where $c(\kappa):= (6-\kappa)(3\kappa-8)/2\kappa$ is the central charge of $\SLE_{\kappa}$ and $I^L(\gamma)$ is the Loewner energy of $\gamma$.
\end{theorem}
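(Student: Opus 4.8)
The plan is to reduce the statement to a comparison between the SLE$_\kappa$ loop measure and a reference measure whose renormalized behavior along admissible neighborhoods is already understood, namely Werner's measure on simple loops (equivalently the outer-boundary law of Brownian loops). The abstract promises exactly such a bridge, so the first step is to invoke the relation established in the body of the paper between the normalized Brownian loop measure on $\hC$ minus a small disk and Werner's measure $\mu^{8/3}$; more precisely I would use that for any analytic simple loop $\gamma$,
\begin{equation*}
\lim_{\varepsilon \to 0} \frac{\mu^{8/3}\!\left(O_\varepsilon(\gamma)\right)}{\mu^{8/3}\!\left(O_\varepsilon(S^1)\right)} = \exp\!\left(\frac{c(8/3)}{24} I^L(\gamma)\right) = \exp\!\left(-\frac{1}{24} I^L(\gamma)\right)\!,
\end{equation*}
since $c(8/3) = 0$... (I would double-check the sign and the value here: in fact $c(\kappa)/24$ for $\kappa = 8/3$ is not zero, so the correct normalization constant must be tracked carefully). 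The key external inputs are: (i) the conformal restriction / Radon–Nikodym description of how $\mu^\kappa$ differs from $\mu^{8/3}$ when restricted to a sub-annulus, which on a domain $D$ carries a density proportional to a power of a conformal-radius-type quantity with exponent governed by the central charge $c(\kappa)$; and (ii) the fact that the Loewner energy is the large-deviations / Onsager–Machlup rate function for SLE$_\kappa$ loops, which is what makes $I^L(\gamma)$ appear with the universal prefactor $c(\kappa)/24$.

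Concretely, I would proceed in the following steps. First, pull everything back by the conformal map $f$: by conformal invariance of $\mu^\kappa$, the ratio $\mu^\kappa(O_\varepsilon(\gamma))/\mu^\kappa(O_\varepsilon(S^1))$ equals a ratio of $\mu^\kappa$-masses of loops confined to $f(A_\varepsilon)$ versus to $A_\varepsilon$, so the whole problem becomes a statement about how the mass of loops ``winding around'' a thin annulus changes when the annulus is deformed by $f$. Second, express this mass, for each fixed $\varepsilon$, via the restriction property: the $\mu^\kappa$-mass of non-contractible loops in a conformal annulus should, up to a multiplicative constant independent of the conformal type, be a function of the modulus of the annulus alone, with a correction term of the form $\exp(\tfrac{c(\kappa)}{24}\,(\text{something}))$ capturing the ``boundary'' contribution — this is where the central charge enters and where the computation is most delicate. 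Third, take $\varepsilon \to 0$: as the annuli $A_\varepsilon$ and $f(A_\varepsilon)$ both degenerate onto $S^1$ and $\gamma$ respectively, the modulus-dependent parts cancel in the ratio, and the surviving limit of the correction terms should converge to $\exp(\tfrac{c(\kappa)}{24} I^L(\gamma))$ by the very definition of the Loewner energy as a renormalized modulus defect (the ``$I^L(\gamma) = \lim$ of a difference of moduli / Fredholm-determinant type quantity'' formula). Throughout I would use that for $\kappa \le 4$ the loops are simple so the outer-boundary identification is clean and no second-boundary (inner) correction appears.

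The main obstacle, I expect, is Step 2–3: making rigorous the claim that the $\mu^\kappa$-mass of non-contractible simple loops trapped in a degenerating analytic annulus admits an asymptotic expansion whose subleading term is governed precisely by the Loewner energy with the right constant $c(\kappa)/24$. This requires either (a) an exact formula for the SLE$_\kappa$ loop measure of an annulus in terms of annulus SLE partition functions / the Brownian loop measure of loops disconnecting the two boundary components (à la the Brownian loop soup and the $\mathbf{c}$-dependent partition function), together with the known variational formula identifying $I^L$ with the renormalized such quantity; or (b) a direct a priori estimate showing the ratio is continuous and multiplicative enough to be pinned down by the $\kappa = 8/3$ case via the Radon–Nikodym derivative between $\mu^\kappa$ and $\mu^{8/3}$ — but that derivative is only defined for loops avoiding a region, so one must handle the degeneration of that region as $\varepsilon \to 0$ with uniform control. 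I would also need to be careful that $O_\varepsilon(\gamma)$ is not Hausdorff-open (as the authors warn), so any ``continuity of $\mu^\kappa$'' argument must be phrased in terms of these specific annular neighborhoods and not generic open sets; I would lean on Proposition~\ref{prop.basis.topology} only to justify that working with admissible neighborhoods loses no generality, not to upgrade them to open sets.
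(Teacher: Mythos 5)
Your opening move --- pulling back by $f$ using conformal invariance and the restriction covariance \eqref{eqn.conformal.restr}, so that the ratio becomes an integral of the Radon--Nikodym density $\exp\bigl(\tfrac{c(\kappa)}{2}(\Lambda^{\ast}(\eta,A^{c})-\Lambda^{\ast}(f(\eta),f(A)^{c}))\bigr)$ against $\mu^{\kappa}_{A}$ over non-contractible loops in $A_{\varepsilon}$ --- is exactly the paper's first step. But from there the proposal has a genuine gap: you never say how to evaluate the limit of this density, and the mechanism you sketch in Steps 2--3 (an asymptotic expansion of the mass of non-contractible loops in a degenerating annulus in terms of its modulus, with the Loewner energy appearing ``by the very definition \dots{} as a renormalized modulus defect'') is not the definition of $I^{L}$ and is not justified; invoking ``the Loewner energy is the Onsager--Machlup rate function'' at this point is circular, since that is the statement being proved. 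The paper's actual engine consists of ingredients your outline does not use: (i) Theorem~\ref{thm.lambda.werner.compact} and Corollary~\ref{cor.lambda.is.Werner}, which say that the \emph{difference} $\Lambda^{\ast}(\eta,A^{c})-\Lambda^{\ast}(f(\eta),f(A)^{c})$ equals the corresponding difference of Werner masses $\mathcal{W}(\eta,A^{c})-\mathcal{W}(f(\eta),f(A)^{c})$ --- this is the main technical result of the paper, and it is needed precisely because $\Lambda^{\ast}$ is not a measure and its individual terms cannot be controlled as sets degenerate (Lemma~\ref{lemma.werner.not.lambda}); (ii) the identity \eqref{eqn.loop.ener.measure} from \cite{W3}, $I^{L}(f(\gamma))-I^{L}(\gamma)=12\,\mathcal{W}(\gamma,A^{c})-12\,\mathcal{W}(f(\gamma),f(A)^{c})$, which is what produces $I^{L}(\gamma)/12$ and hence the constant $c(\kappa)/24$; and (iii) the uniform sup/inf control of $\mathcal{W}(\eta,A^{c})-\mathcal{W}(f(\eta),f(A)^{c})$ over all non-contractible $\eta\subset A_{\varepsilon}$ (Lemma~\ref{lemma.continuity.Wern.measure_2}), after which the numerator and denominator integrals are compared immediately.

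Two further problems. Your fallback route (b), comparing $\mu^{\kappa}$ to $\mu^{8/3}$ by a Radon--Nikodym derivative, is not available: the restriction covariance only relates $\mu^{\kappa}_{D}$ to $\mu^{\kappa}$ for the \emph{same} $\kappa$, and there is no absolute continuity between loop measures of different central charges. And your displayed ``base case'' is internally inconsistent: $c(8/3)=0$, so the theorem for $\kappa=8/3$ gives limit $1$ (consistent with restriction \emph{invariance} of Werner's measure), whereas you equate it to $\exp(-I^{L}(\gamma)/24)$ and then assert that $c(8/3)/24\neq 0$, which is false.
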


In other words, the functional $\frac{c(\kappa)}{24} I^{L}$ can then be viewed as an Onsager--Machlup functional for the $\text{SLE}_{\kappa}$ loop measure on the space of simple loops. 
\begin{remark}
    We note that $\SLE_\kappa$ loop has the Loewner driving function $\sqrt \kappa B$ where $B$ is a two-sided standard Brownian motion on $\R$, and $I^L(\gamma)$ is defined as the Dirichlet energy of the driving function of $\gamma$ as introduced in \cite{RohdeWang2021,W2}.  We will recall the definition and basic properties of the Loewner energy in Section~\ref{sec:Loewner}.
    
    In light of \eqref{eqn.OM.functional2}, a natural guess for the asymptotics \eqref{eqn.main} would be
    \begin{align*}
\lim_{\varepsilon \rightarrow 0} \frac{\mu^{\kappa}  \left( O_{\varepsilon} (\gamma )\right) }{\mu^{\kappa}  \left( O_{\varepsilon} (S^{1} )\right) } = \exp \left( -\frac{I^{L}(\gamma)}{\kappa} \right).
\end{align*}
Theorem~\ref{thm.main} shows this guess is only true in the large deviation ($\kappa \to 0\splus$) regime since $ c(\kappa)\sim - 24 / \kappa$. In fact, all previous relations between the Loewner energy and SLE are stated in the $\kappa \to 0\splus$ or $\kappa \to \infty$ regime. This work is the first time we relate the Loewner energy to SLE$_\kappa$ with a fixed $\kappa$. See \cite{Wang_survey} for a survey on large deviation principles of SLE.
We also note that for $8/3 < \kappa \le 4$, $c(\kappa) \in (0,1]$ has a different sign than $-1/\kappa$. 
Theorem~\ref{thm.main} can be guessed from the expression of the Loewner energy in terms of determinants of Laplacians as shown in \cite{W2} and the partition function of SLE \cite{Dubedat2009partition}.
    The Loewner energy also appears in various other contexts, see, e.g., \cite{TT06,johansson2021strong}. 
\end{remark}

\begin{remark}
     The proof of Theorem~\ref{thm.main} only uses the fact that SLE$_\kappa$ loop measure is a Malliavin--Kontsevich--Suhov loop measure (introduced in \cite{Kontsevich_SLE} and proved in \cite{Zhan2017} for SLE loops) with central charge $c(\kappa)$. It is not yet known if there is a unique (up to scaling) Malliavin--Kontsevich--Suhov loop measure for a given central charge. \emph{A priori}, our result applies more generally to any Malliavin--Kontsevich--Suhov loop measure.
\end{remark}

   Malliavin--Kontsevich--Suhov loop measure is characterized by a property known as the conformal restriction covariance, stated using the normalized Brownian loop measure $\Lambda^{\ast}$ introduced in \cite{LaurenceLawler2013}. 
    Let  $V_{1}$ and $V_{2}$ be two disjoint compact non-polar subsets of $\hC$, $\Lambda^{\ast} (V_1, V_2) \in \mathbb R$ is defined as a renormalization of total mass of loops intersecting both $V_1$ and $V_2$ under Brownian loop measure\footnote{To relate to \cite{Kontsevich_SLE}, the total mass of Brownian loop measure can be interpreted as $-\log \det_\zeta \Delta$ where $\Delta$ is the Laplace-Beltrami operator as pointed out in \cite{LeJan2006det,Dubedat2009partition}.}. We recall that a set $V$ is \emph{non-polar} if, for every $z\in \mathbb C$, there is a positive probability that a Brownian motion started at $z$ hits $V$.   See Section~\ref{sec:BLM} for more details.

\bigskip

  On the other hand, the Loewner energy for simple loops 
  satisfies a similar conformal restriction property stated with Werner's measure $\mathcal W$ introduced in \cite{Werner2008}. % See also \cite{W2,TT06} for the relation between Loewner energy and K\"ahler geometry of the universal Teichm\"uller space. 
  Werner's measure is an infinite measure on \emph{simple} loops and is obtained as the outer boundary of the Brownian loops in $\mathbb C$ and coincides with a multiple of the SLE$_{8/3}$ loop measure.  
  We write $\mathcal W (V_1, V_2)$ as the total mass of loops under Werner's measure intersecting both $V_1$ and $V_2$.

A key step of the proof of Theorem~\ref{thm.main} is to relate $\Lambda^{\ast} (V_1, V_2)$ to $\mathcal W (V_1, V_2)$ and then use the conformal restriction property for both SLE loop measure and Loewner energy.

\begin{theorem}[See Lemma~\ref{lemma.werner.not.lambda} and Corollary~\ref{cor.lambda.is.Werner}]\label{thm.intro.lambda.werner}
 In general, $\Lambda^{\ast} (V_1, V_2)$ and $\mathcal W (V_1, V_2)$ are not equal.
 For instance, let $K$ be a non-polar compact set and $\{K_{n}\}_{n\in \mathbb{N}}$ be a decreasing family of compact non-polar subsets shrinking to a point and disjoint from $K$. Then 
\begin{align*}
    & \lim_{n\rightarrow \infty} \Lambda^{\ast} \left( K, K_{n} \right) = - \infty,  & \lim_{n\rightarrow \infty} \mathcal{W} \left( K, K_{n} \right) = 0. 
\end{align*}
        However, let $A$ be an annulus, $\gamma \subset A$ be a simple loop, and $f: A\rightarrow f(A)$ be a conformal map. Then we have that
    \begin{align}
       &  \mathcal{W} \left( \gamma, A^{c} \right) - \mathcal{W} ( f(\gamma), f(A)^{c} )    =\Lambda^{\ast} \left( \gamma, A^{c} \right) - \Lambda^{\ast} ( f(\gamma), f(A)^{c} ). 
    \end{align} 
\end{theorem}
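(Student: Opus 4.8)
The plan is to split the statement into two independent parts: the negative part (showing $\Lambda^*$ and $\mathcal W$ genuinely differ, with a quantitative blow-up versus decay as $K_n$ shrinks), and the positive part (the conformal-anomaly-type identity for the difference across a conformal map). For the negative part, I would work directly from the definitions recalled in Section~\ref{sec:BLM}. The quantity $\mathcal W(K,K_n)$ is the Werner measure of simple loops meeting both $K$ and $K_n$; since Werner's measure is a genuine (locally finite away from degenerate configurations) measure on simple loops and the event "separates $K$ from a point" has mass tending to $0$ as the target set shrinks to that point (a loop surrounding a point and meeting $K$ must have diameter bounded below, and the mass of large loops is finite), one gets $\mathcal W(K,K_n)\to 0$ by monotone convergence / continuity from above. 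For $\Lambda^*(K,K_n)$, the key is that $\Lambda^*$ is \emph{not} a measure but a renormalized quantity: the true Brownian loop measure of loops meeting both $K$ and $K_n$ is infinite (small loops near the point contribute a logarithmic divergence), and the renormalization $\Lambda^*$ introduced in \cite{LaurenceLawler2013} subtracts a universal divergent term depending only on $K_n$. As $K_n$ shrinks to a point, the subtracted term and the actual (still infinite, then regularized) mass are calibrated so that $\Lambda^*(K,K_n)\to -\infty$; concretely I would use the explicit formula for $\Lambda^*$ in terms of the Brownian loop measure of loops in an annular region together with the known logarithmic asymptotics of that mass as the inner radius degenerates (this is where I expect to cite the precise form from \cite{LaurenceLawler2013} or re-derive it via the Brownian loop measure's behavior under scaling).

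For the positive part — the conformal identity
\[
\mathcal W(\gamma, A^c) - \mathcal W(f(\gamma), f(A)^c) = \Lambda^*(\gamma, A^c) - \Lambda^*(f(\gamma), f(A)^c)
\]
— I would first observe that it suffices to prove that the \emph{difference}
\[
D(V_1,V_2) := \Lambda^*(V_1,V_2) - \mathcal W(V_1,V_2)
\]
is conformally invariant in the relevant sense, i.e. $D(\gamma, A^c) = D(f(\gamma), f(A)^c)$. The Brownian loop measure is conformally invariant as a measure on loops; the obstruction to conformal invariance of both $\Lambda^*$ and $\mathcal W$ restricted to loops separating $\gamma$ from $A^c$ (equivalently, loops in $A$ that are non-contractible in $A$ and... actually loops crossing $\gamma$ and staying in $A$) is a boundary/degenerate term. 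For $\mathcal W$, Werner's measure is conformally invariant on its own, so $\mathcal W(\gamma,A^c)$ equals the Werner mass of the conformal image configuration \emph{plus} a correction coming only from the fact that "$A^c$" is not mapped to "$f(A)^c$" in a way that preserves which loops are counted — but since $f$ maps $A$ onto $f(A)$ and $\gamma$ onto $f(\gamma)$, and Werner's measure is conformally invariant, in fact $\mathcal W(\gamma, A^c)=\mathcal W(f(\gamma),f(A)^c)$ would hold if the event only depended on the loop's position relative to $\gamma$ and $\partial A$; the subtlety is loops that exit $A$. The clean way: express both $\Lambda^*(\gamma,A^c)$ and $\mathcal W(\gamma,A^c)$ via the Brownian loop measure of loops that are \emph{not} contained in $A$ but separate in the appropriate sense, and use the conformal restriction formula (the Brownian loop measure of loops in $\hC$ meeting $A^c$, decomposed by their intersection with $A$) so that the non-conformally-invariant pieces for $\Lambda^*$ and for $\mathcal W$ are \emph{the same piece} — namely the Brownian loop measure term $\mu^{\mathrm{loop}}(\text{loops meeting }\gamma\text{ and }A^c)$, whose renormalization is common to both. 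Subtracting, the common anomaly cancels in $D$, leaving a conformally invariant remainder.

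More precisely, the mechanism I anticipate is: Werner's measure $\mathcal W$ is the outer-boundary pushforward of the Brownian loop measure, so $\mathcal W(\gamma, A^c)$ and $\Lambda^*(\gamma,A^c)$ both can be written as $\mu^{\mathrm{loop}}$-mass of a certain loop event plus a renormalization constant, where the \emph{loop event} differs (one is about outer boundaries, one about the loops themselves) but the \emph{renormalization constant} — the subtracted divergence — depends only on the local behavior near $\gamma$ and $A^c$ and transforms the same way under $f$ for both. Hence $\Lambda^*(\gamma,A^c)-\mathcal W(\gamma,A^c)$ is the $\mu^{\mathrm{loop}}$-mass of a \emph{difference event} that carries no renormalization, is genuinely finite, and is conformally invariant by conformal invariance of $\mu^{\mathrm{loop}}$; applying this with and without $f$ and subtracting gives the identity. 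The main obstacle, and where I would spend the most care, is making the "common renormalization constant" statement rigorous: I need the precise definitions of $\Lambda^*$ from \cite{LaurenceLawler2013} and of $\mathcal W$ from \cite{Werner2008} lined up so that the divergent subtractions are literally equal (not merely equal up to a conformally invariant term), and I need to handle the fact that $O_\varepsilon$-type events involve non-contractibility, so the relevant loops are those crossing $\gamma$ inside $A$ — I must check that the outer-boundary map is a bijection (up to null sets) between the Brownian loops relevant to $\Lambda^*(\gamma,A^c)$ and the simple loops relevant to $\mathcal W(\gamma,A^c)$, so that the "difference event" is exactly controllable. Once that bookkeeping is set up, conformal invariance of the Brownian loop measure does all the remaining work.
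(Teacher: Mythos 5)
Your reduction of the identity to the conformal invariance of $D(V_1,V_2):=\Lambda^{\ast}(V_1,V_2)-\mathcal W(V_1,V_2)$ is exactly the right reformulation, and your treatment of the negative part is essentially the paper's (monotone convergence for $\mathcal W$; for $\Lambda^{\ast}$ the paper telescopes via Lemma~\ref{lem:lambda_*_restriction} to get $\Lambda^{\ast}(K,K_1)-\Lambda^{\ast}(K,K_n)=\mathcal B(K,K_1\setminus K_n;K_n^c)\to+\infty$ --- note this divergence comes from \emph{large} loops once the excluded set $K_n$ degenerates to a polar point, not from ``small loops near the point'' as you suggest: small loops cannot hit both $K$ and $K_n$). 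The genuine gap is in the positive part. Your central mechanism is that the outer-boundary map should be a bijection, up to null sets, between the Brownian loops relevant to $\Lambda^{\ast}(\gamma,A^c)$ and the simple loops relevant to $\mathcal W(\gamma,A^c)$, so that the divergent subtractions are ``literally equal'' and cancel in $D$. This is false, and its failure is precisely why $\Lambda^{\ast}\neq\mathcal W$ in the first place: a Brownian loop $\delta\subset\mathbb D_r(z_0)^c$ can intersect both $\gamma$ and $A^c$ while $\partial_{z_0}\delta$ misses one of them (for instance when $\delta$ surrounds $\mathbb D_r(z_0)$), and the $\log\log(1/r)$ divergence that $\Lambda^{\ast}$ subtracts lives exactly on this discrepancy set. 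So the cancellation cannot be obtained by matching the two loop events directly, and your proposal contains no device to repair this.

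The paper's repair is a concrete construction you would need to supply: attach a stick $I^{\varepsilon}(z_0)$ joining $K$ to $\partial\mathbb D_{\varepsilon}(z_0)$ and sticks $J_1^{\varepsilon},J_2^{\varepsilon}$ joining $\partial\mathbb D_{\varepsilon}(z_0)$ to the two boundary components of $A$. For the augmented sets, which are connected and attached to the boundary of the simply connected domain $\mathbb D_{\varepsilon}(z_0)^c$, a loop in that domain hits the set if and only if its $z_0$-component boundary does, so $\mathcal W=\mathcal B$ holds \emph{exactly} for the augmented configuration (equation \eqref{eqn.split}). Conformal invariance of both $\mathcal W$ and $\mu^{\BL}$ on $A\setminus\mathbb D_{\varepsilon}(z_0)$ removes the $J_i$ terms; the limit $\varepsilon\to 0$ produces the $\log\log$ renormalization that turns $\mathcal B(\,\cdot\,;\mathbb D_{\varepsilon}(z_0)^c)$ into $\Lambda^{\ast}$; and a second limit sliding the attachment point $z_0$ into $K$ kills the remaining stick contributions. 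Note also that the sticks $J_1^{\varepsilon},J_2^{\varepsilon}$ must avoid $K$, so this argument only applies when $K$ does not separate the two boundary components of $A$; the case of the full loop $\gamma$ is recovered afterwards by approximating $\gamma$ by the arcs $\gamma([0,1-1/n])$ and a further monotone-convergence step using Lemma~\ref{lem:lambda_*_restriction} (Corollary~\ref{cor.lambda.is.Werner}). Neither of these steps has a substitute in your proposal.
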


See also Theorem~\ref{thm.lambda.werner.compact}. % for a more general result.

\section{Brownian loop measure and Werner's measure} \label{sec:loop_measures}

\subsection{Brownian loop measure} \label{sec:BLM}
In this section, we recall the definition and main features of the Brownian loop measure on Brownian-type non-simple loops and Werner's measure on SLE$_{8/3}$-type simple loops. 

The Brownian loop measure $\mu^{\BL}$ on the Riemann sphere $\hC$  was introduced by Lawler and Werner in \cite{LawlerWerner2004} using an integral of weighted two-dimensional Brownian bridges, and its definition can be easily generalized to any Riemannian surface. See, e.g., \cite[Sec.\,2]{W3}. For an open set $D \subset \hC$,  we write $\mu^{\BL}_D$ for the measure $\mu^{\BL}$ restricted to the loops contained in $D$. The Brownian loop measure satisfies the following properties:

-\textit{Restriction invariance}: if $D^{\prime}\subset D$, then $ \dd \mu_{D^{\prime}}^{\BL} (\cdot) = 1_{\cdot \subset D^{\prime} } \,\dd \mu_{D}^{\BL}(\cdot)$.

-\textit{Conformal invariance}: if $D$ and $D^{\prime}$ are conformally equivalent domains in the plane, the pushforward of $\mu_{D}^{\BL}$ via any conformal map from $D$ to $D^{\prime}$, is exactly  $\mu_{D^{\prime}}^{\BL}$.

The mass of Brownian loops passing through one point is zero. Therefore, we also consider $\mu^{\BL}$ as the Brownian loop measure on $\mathbb C$.
The total mass %(with respect to the Brownian loop measure) 
of loops contained in $\mathbb{C}$ is infinite from both the contribution of big and small loops. 
To get rid of small loops, we consider $V_{1}$ and $V_{2}$ two compact disjoint non-polar subsets of $\mathbb{C}$, and let $\mathcal{L}\left(V_{1}, V_{2} \right)$ be the family of loops in $\mathbb{C}$ that intersect both $V_{1}$ and $V_{2}$. 
But $\mu^{\BL} \left( \mathcal{L}\left(V_{1}, V_{2} \right) \right)$ is still infinite due to the presence of big loops. %Intuitively, the set $\mathcal{L}\left(V_{1}, V_{2} \right)$ contains too many large loops. 
In contrast, if $D$ is a proper open subset of $\mathbb{C}$ with non-polar boundary containing $V_{1}$ and $V_{2}$, then $\mu^{\BL}_{D} \left( \mathcal{L}\left(V_{1}, V_{2} \right) \right)$ is finite (since big loops in $\mathbb C$ will eventually hit  $\partial D$ and are not included in $\mu^{\BL}_{D}$). We write 
$$\mathcal B (V_1, V_2; D) : = \mu^{\BL}_{D} \left( \mathcal{L}\left(V_{1}, V_{2} \right) \right).$$

The \emph{normalized Brownian loop measure} $\Lambda^{\ast}$  introduced in \cite{LaurenceLawler2013} is defined by 
\begin{equation}\label{eqn.lambda.ast}
\Lambda^{\ast} \left( V_{1}, V_{2} \right) := \lim_{r\rightarrow 0} \left( \mathcal{B} \left( V_{1}, V_{2} ; \, \mathbb{D}_{r} (z_{0})^c   \right)    - \log \log \frac{1}{r}   \right),
\end{equation}
where $z_{0} \in \mathbb{C}$ and $\mathbb{D}_{r} (z_{0})^c  = \hC \setminus \mathbb{D}_{r} (z_{0}) = \{z \in \hC \, |\, |z-z_0| > r\}$. %, and $\mathcal{B} \left( V_{1}, V_{2} ; \, \mathbb{D}_{r} (z_{0})^c   \right) := \mu^{\BL}_{\mathbb{D}_{r} (z_{0})^c } \left( \mathcal{L} \left(V_{1}, V_{2} \right)    \right)$.  
It was proved in \cite{LaurenceLawler2013} that the limit \eqref{eqn.lambda.ast} converges to a finite number if $V_{1}$ and $V_{2}$ are disjoint non-polar compact subsets of $\hC$, and the value does not depend on the choice of $z_{0}$. 
One can also choose $z_0 = \infty$, then
\begin{equation}\label{eqn.lambda.ast.infty}
\Lambda^{\ast} \left( V_{1}, V_{2} \right) = \lim_{R\rightarrow \infty} \left( \mathcal{B} \left( V_{1}, V_{2} ; \, \mathbb{D}_{R}   \right)  - \log \log R   \right),
\end{equation}
where $\mathbb D_R := \mathbb D_R(0)$.

%The normalized Brownian loop measure has the advantage that $\Lambda^{\ast} \left( V_{1}, V_{2} \right)  <\infty$, in contrast with $\mu^{\BL} \left( \mathcal{L} \left(V_{1}, V_{2} \right) \right) = \infty$, and 
%It has been introduced to study the Radon-Nikodym derivative for reversed $\text{SLE}_{\kappa}$ with respect to the whole-plane $\text{SLE}_{\kappa}$. 
We remark that as Lemma~\ref{lemma.werner.not.lambda} shows, %though being called normalized Brownian loop measure, 
$\Lambda^{\ast}$ is not induced from a measure in the sense that $\Lambda^{\ast} (V_1, V_2)$ cannot be written as the total mass of loops intersecting $V_1$ and $V_2$ under some positive measure. Nonetheless, $\Lambda^{\ast}(\cdot, \cdot)$ satisfies M\"obius invariance, that is, 
$$\Lambda^{\ast} \left( A(V_{1}) , A(V_{2}) \right) = \Lambda^{\ast} \left( V_{1}, V_{2} \right)$$ for any M\"obius transformation $A$ of the Riemann sphere. Moreover, we have the following relation which follows directly from the definition.
\begin{lemma}\label{lem:lambda_*_restriction}
   Let $D^\prime \subset D$ be proper subdomains of $\mathbb C$ and $K \subset D^\prime$ is a non-polar compact set. Then 
   $$\Lambda^{\ast} (K, D^{\prime c}) = \Lambda^{\ast} (K, D^{c}) + \mathcal B (K, D\setminus D^\prime; D).$$ 
\end{lemma}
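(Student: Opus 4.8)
The plan is to unwind both sides directly from the definition \eqref{eqn.lambda.ast} of $\Lambda^{\ast}$ and from the restriction invariance of the Brownian loop measure, choosing the renormalizing cutoffs around a single common point so that the two $\log\log(1/r)$ counterterms cancel.

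Fix a point $z_{0}\in D^{\prime}\setminus K$, which is nonempty since $K$ is a compact subset of the nonempty open set $D^{\prime}$. For all sufficiently small $r>0$ the closed disk $\mathbb{D}_{r}(z_{0})$ is disjoint from $K$ and contained in $D^{\prime}$, so that $K$, $D^{c}$ and $D^{\prime c}$ all lie in $\mathbb{D}_{r}(z_{0})^{c}$; consequently both $\Lambda^{\ast}(K,D^{c})$ and $\Lambda^{\ast}(K,D^{\prime c})$ can be computed from \eqref{eqn.lambda.ast} along this common family of cutoffs, the limits existing and being finite by \cite{LaurenceLawler2013} since $D^{c}$ and $D^{\prime c}$ are non-polar compact subsets of $\hC$. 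Because $D^{\prime}\subset D$ we have $D^{c}\subset D^{\prime c}$, hence $\mathcal{L}(K,D^{c})\subset\mathcal{L}(K,D^{\prime c})$, and therefore
\begin{equation*}
\mathcal{B}(K,D^{\prime c};\mathbb{D}_{r}(z_{0})^{c})-\mathcal{B}(K,D^{c};\mathbb{D}_{r}(z_{0})^{c})=\mu^{\BL}_{\mathbb{D}_{r}(z_{0})^{c}}\bigl(\{\ell:\ell\cap K\neq\emptyset,\ \ell\cap D^{\prime c}\neq\emptyset,\ \ell\cap D^{c}=\emptyset\}\bigr).
\end{equation*}
The condition $\ell\cap D^{c}=\emptyset$ simply means $\ell\subset D$, and for such $\ell$ one has $\ell\cap D^{\prime c}\neq\emptyset$ if and only if $\ell\cap(D\setminus D^{\prime})\neq\emptyset$; so by restriction invariance the right-hand side equals $\mu^{\BL}_{D\cap\mathbb{D}_{r}(z_{0})^{c}}(\mathcal{L}(K,D\setminus D^{\prime}))$.

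Now let $r\to 0$. The domains $D\cap\mathbb{D}_{r}(z_{0})^{c}$ increase to $D\setminus\{z_{0}\}$, the corresponding loop families increase, and since the Brownian loop measure of loops through the single point $z_{0}$ vanishes, monotone convergence gives $\mu^{\BL}_{D\cap\mathbb{D}_{r}(z_{0})^{c}}(\mathcal{L}(K,D\setminus D^{\prime}))\to\mathcal{B}(K,D\setminus D^{\prime};D)$. On the other hand, subtracting $\log\log(1/r)$ from each of $\mathcal{B}(K,D^{\prime c};\mathbb{D}_{r}(z_{0})^{c})$ and $\mathcal{B}(K,D^{c};\mathbb{D}_{r}(z_{0})^{c})$ and letting $r\to 0$, the left-hand side of the displayed identity converges to $\Lambda^{\ast}(K,D^{\prime c})-\Lambda^{\ast}(K,D^{c})$ by \eqref{eqn.lambda.ast}. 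Equating the two limits proves the lemma; in particular $\mathcal{B}(K,D\setminus D^{\prime};D)$ is automatically finite, being the difference of two finite quantities.

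There is no real obstacle here: the argument is pure bookkeeping with the definitions. The only points requiring mild care are (i) centering the cutoff disks at a single point $z_{0}$ so the two logarithmic counterterms cancel exactly, (ii) the set-theoretic reduction $\ell\cap D^{c}=\emptyset\iff\ell\subset D$ combined with $D^{c}\subset D^{\prime c}$, and (iii) discarding the puncture at $z_{0}$ in the limit, which is legitimate precisely because single points are null for $\mu^{\BL}$.
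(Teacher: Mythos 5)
Your proof is correct and is exactly the direct unwinding of the definition \eqref{eqn.lambda.ast} that the paper intends when it asserts the lemma ``follows directly from the definition'' (the paper supplies no written proof): centering the cutoff disks at a common $z_0$, cancelling the $\log\log(1/r)$ counterterms, identifying the difference of the two $\mathcal{B}(\cdot,\cdot;\mathbb{D}_r(z_0)^c)$ terms with the mass of loops in $D\cap\mathbb{D}_r(z_0)^c$ hitting $K$ and $D\setminus D'$, and removing the puncture at $z_0$ by monotone convergence. The only caveat is that your appeal to the finiteness of the two $\Lambda^{\ast}$ limits tacitly assumes $D^c$ (hence $D'^c$) is non-polar, which is not literally among the stated hypotheses but is implicit in the lemma being an identity between finite quantities and holds in every application in the paper.
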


The Brownian loop measure has been used in describing the conformal restriction covariance property of chordal SLE \cite{LSW_CR_chordal} while the ambient domain is the upper half-plane, hence renormalization is not needed. While we consider variants of SLE on the Riemann sphere, such as the case of whole-plane SLE in \cite{LaurenceLawler2013}, the right normalization applied to Brownian loop measure so that similar conformal restriction formula holds is given by $\Lambda^{\ast}$.

\subsection{SLE loop measures and conformal restriction covariance}

In \cite{Zhan2017}, Zhan constructed the SLE$_\kappa$ loop measure and showed its \emph{conformal restriction covariance} with central charge $c (\kappa) = (6-\kappa)(3\kappa-8)/2\kappa \le 1$ for $\kappa \le 4$. More precisely, this means a consistent family of measures $(\mu^\kappa_D)_D$ on 
\emph{simple} loops contained in a subdomain $D \subset \hC$ which satisfies:

-\textit{Restriction covariance}: for $D \subset \hC$, then
\begin{equation}\label{eqn.conformal.restr}
\frac{d \mu_{D}^\kappa}{ d \mu^\kappa} \left( \cdot \right) = 1_{ \left\{ \cdot \, \subset D \right\} } J%^{\ast}
_\kappa \left( \cdot, D^{c} \right),
\end{equation}
where $\mu^\kappa = \mu^\kappa_{\hC}$ and $J_\kappa \left( \cdot, D^{c} \right):= \exp \left( \frac{c(\kappa)}{2}\, \Lambda^{\ast} \left( \cdot, D^{c} \right) \right)$. %, and $c (\kappa)$ is the central charge and $\Lambda^{\ast}$ is .

-\textit{Conformal invariance}: if $D$ and $D^{\prime}$ are conformally equivalent domains in the plane, the pushforward of $\mu^\kappa_{D}$ via any conformal map from $D$ to $D^{\prime}$, is exactly  $\mu^\kappa_{D^{\prime}}$.

This property is a rewriting of the trivializability of the determinant line bundle on space of simple loops as postulated in \cite{Kontsevich_SLE} by Kontsevich and Suhov, while inspired by the works of Malliavin, and that is why we say that SLE loop measure is a \emph{Malliavin--Kontsevich--Suhov} measure. However, it is not known if there is a unique such measure (up to scaling) for a given central charge except in the case of $\kappa = 8/3$.

When $\kappa = 8/3$, we have $c(\kappa) = 0$. The restriction covariance becomes restriction invariance, namely,
$$ \dd \mu_{D^{\prime}}^{\kappa} (\cdot) = 1_{\cdot \subset D^{\prime} } \,\dd \mu_{D}^{\kappa}(\cdot)$$ 
using $J_\kappa \equiv 1$ and Lemma~\ref{lem:lambda_*_restriction}. 
 This measure was first studied by Werner \cite{Werner2008} prior to the more general definition \eqref{eqn.conformal.restr} for all central charge in \cite{Kontsevich_SLE}, often referred to as Werner's measure. Werner showed that there exists a unique  (up to a scaling) measure on simple loops with the conformal and restriction invariance.
 We note that the same properties hold for 
the Brownian loop measure, except that the Brownian loops are not simple. Indeed, Werner's measure can be realized as the measure on the outer boundary of the Brownian loops under $\mu^{\BL}$, which also pins down the scaling factor.

More precisely, if $V_{1}$ and $V_{2}$ are 
closed %compact
disjoint subsets of $\hC$, then we write
\begin{equation}\label{eqn.werner.measure.old}
\mathcal{W} \left(V_{1}, V_{2}\right):= \mu^{\BL} \left( \left\{ \text{loop } \delta \,|\, \partial_\infty \delta \cap V_{1} \neq \emptyset, \, \partial_\infty \delta \cap V_{2} \neq \emptyset \right\} \right)
\end{equation}
which is finite as shown in \cite[Lem.\,4]{NacuWerner2011}. Here, $\partial_\infty \delta$ is the \emph{outer boundary} of  Brownian loop $\delta$, namely, the boundary of the connected component of $\hC \setminus \delta$ containing $\infty$.
Intuitively, one may view $\mathcal{W}(V_{1}, V_{2})$ as another normalization of $\mu^{\BL} \left( \mathcal{L} (V_{1}, V_{2}) \right)$ since the big Brownian loops will have outer boundary near $\infty$ which does not hit $V_{1}, V_{2}$. Thanks to the invariance of Werner's measure and Brownian loop measure under M\"obius transformations, we can also write \eqref{eqn.werner.measure.old} as 
\begin{equation}\label{eqn.werner.measure}
\mathcal{W} \left(V_{1}, V_{2}\right)= \mu^{\BL} \left( \left\{ \text{loop } \delta \,|\, \partial_{z_{0}} \delta \cap V_{1} \neq \emptyset, \, \partial_{z_{0}} \delta \cap V_{2} \neq \emptyset \right\} \right),
\end{equation}
where $z_{0} \in \mathbb C$ is any point, and  $\partial_{z_{0}} \delta$ is the boundary of the connected component of $\hC \setminus \delta$ containing $z_{0}$.

\subsection{Loewner energy} \label{sec:Loewner}
In this section, we recall briefly the definition of the Loewner energy of a Jordan curve introduced in \cite{RohdeWang2021}.
 Let $\gamma: [0,1] \to \Chat = \mathbb C \cup \{\infty\}$ be a continuously parametrized Jordan curve with $\gamma (0) = \gamma(1)$. 
For every $\varepsilon>0$, $\gamma [\varepsilon, 1]$ is a chord connecting $\g(\varepsilon)$ to $\gamma (1)$ in the simply connected domain $\Chat \smallsetminus \gamma[0, \varepsilon]$.
    The \emph{Loewner energy} of $\gamma$ is %defined as
\begin{equation}\label{def:Loewner_energy}
I^L(\gamma): = \lim_{\varepsilon \to 0} I^C_{\Chat \smallsetminus \gamma[0, \varepsilon]; \gamma(\varepsilon), \gamma(0)} (\gamma[\varepsilon, 1]) \in [0,\infty].
\end{equation}
Here, for a simple chord $\eta$ in the simply connected domain $D$ connecting $a,b \in \partial D$,
$$I^C_{D;a,b} (\eta) := \frac12\int_0^\infty(\dd W_t/\dd t)^2 \, \dd t$$ is the Dirichlet energy of the chordal driving function $W$ of $\eta$. We remark that the limit on the right-hand side of \eqref{def:Loewner_energy} is increasing as $\varepsilon \to 0_+$. Hence, the limit exists in $[0,\infty]$.
\begin{remark}
    We also have that  $I^L (\gamma) = 0$ if and only if $I^C_{\Chat \smallsetminus \gamma[0, \varepsilon]; \gamma(\varepsilon), \gamma(0)} (\gamma[\varepsilon, 1]) = 0$ for all $\varepsilon \in (0,1)$. In other words, the image of $\gamma[\varepsilon, 1]$ by a conformal map sending respectively $(\Chat \smallsetminus \gamma[0, \varepsilon]; \gamma(\varepsilon), \gamma(0))$ to $(\mathbb H;0,\infty)$ is the imaginary axis $\ii \m R_+$. It is then easy to see that this holds for all $\varepsilon$ if and only if $\gamma$ is a circle. 
\end{remark}

It is not apparent from the definition in \eqref{def:Loewner_energy}, but it is proved in \cite{W1,RohdeWang2021} that the Loewner energy does not depend on the choice of the curve's orientation nor its root. 
In fact, the parametrization-independence is further explained by the following equivalent expression of the Loewner energy. 
\begin{theorem}[See {\cite[Thm.\,1.4]{W2}}]\label{thm:intro_equiv_energy_WP}
Let $\Omega$ \textnormal(resp., $\Omega^*$\textnormal) denote the component of $\Chat \smallsetminus \g$ which does not contain $\infty$ \textnormal(resp., which contains $\infty$\textnormal) and $f$ \textnormal(resp., $g$\textnormal) be a conformal map from the unit disk $\m D = \{z \in \Chat \colon |z| < 1 \}$ onto $\Omega$ \textnormal(resp., from $\m D^* = \{z \in \Chat \colon |z| >1 \}$ onto $\Omega^*$\textnormal). We assume further that $g (\infty) = \infty$.
The Loewner energy of $\gamma$ can be expressed as
       \begin{equation} \label{eq_disk_energy}
   I^L(\gamma) = \frac{1}{\pi} \int_{\m D} \abs{\frac{f''}{f'}}^2 \dd^2 z + \frac{1}{\pi} \int_{\m D^*} \abs{\frac{g''}{g'}}^2 \dd^2 z +4 \log \abs{\frac{f'(0)}{g'(\infty)}},
 \end{equation}
 where $g'(\infty):=\lim_{z\to \infty} g'(z)$ and $\dd^2 z$ is the Euclidean area measure.
\end{theorem}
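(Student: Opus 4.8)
\emph{Strategy.} The plan is to deduce \eqref{eq_disk_energy} from the identity $I^L(\gamma) = S_1(\gamma)$, where $S_1$ is the universal Liouville action of Takhtajan--Teo \cite{TT06}: the right-hand side of \eqref{eq_disk_energy} is one of its standard expressions, the passage to the Schwarzian form of $S_1$ being a Green's-theorem manipulation with the cocycle $\frac{(m\circ f)''}{(m\circ f)'} = \frac{m''}{m'}(f)\,f' + \frac{f''}{f'}$, so it suffices to prove $I^L(\gamma) = S_1(\gamma)$. I would first make two reductions. Both sides are invariant under M\"obius maps of $\Chat$ acting on $\gamma$ (for $S_1$ this is its defining transformation rule), so one may normalize $0 \in \Omega$, $\infty \in \Omega^*$. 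And both sides are finite exactly on the Weil--Petersson class of quasicircles, on which analytic Jordan curves are dense; using that $I^L$ is lower semicontinuous and continuous along the exhaustions $\gamma_\rho := f(\rho S^1)$ (and the matching exhaustion of $\Omega^*$) as $\rho \uparrow 1$ --- along which the relevant conformal maps converge locally uniformly with all derivatives and the $L^2$ norms of $f''/f'$, $g''/g'$ converge, with tails dominated by the Weil--Petersson norm --- it is enough to establish \eqref{eq_disk_energy} for analytic $\gamma$.

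\emph{The Loewner-flow identity.} Root $\gamma$ at $p = \gamma(0) = \gamma(1)$. By \eqref{def:Loewner_energy}, $I^L(\gamma) = \lim_{\varepsilon \to 0} I^C_{\Chat \smallsetminus \gamma[0,\varepsilon];\, \gamma(\varepsilon), p}(\gamma[\varepsilon,1])$, and uniformizing $\Chat \smallsetminus \gamma[0,\varepsilon]$ onto $\mathbb H$ with $\gamma(\varepsilon) \mapsto 0$, $p \mapsto \infty$ turns $\gamma[\varepsilon,1]$ into a chord $\eta_\varepsilon \subset (\mathbb H; 0, \infty)$ whose chordal driving function $W^{(\varepsilon)}$ makes this energy equal to $\tfrac12 \int_0^\infty (\dot W^{(\varepsilon)}_t)^2\, \dd t$. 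The plan is then to introduce a time-dependent functional $\mathcal S^{(\varepsilon)}_t$ --- a suitably renormalized $\tfrac1\pi \int \abs{(\log \psi_t')'}^2 \dd^2 z + \tfrac1\pi \int \abs{(\log \widetilde\psi_t')'}^2 \dd^2 z$, where $\psi_t, \widetilde\psi_t$ uniformize the two components of $\mathbb H \smallsetminus \eta_\varepsilon[t,\infty)$, plus an explicit normalization term --- with the properties: (a) $\mathcal S^{(\varepsilon)}_t \to 0$ as $t \to \infty$, when the remaining chord shrinks to a point and the complementary domains flatten out; (b) $\tfrac{\dd}{\dd t} \mathcal S^{(\varepsilon)}_t = -\tfrac12 (\dot W^{(\varepsilon)}_t)^2$. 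Granting (a)--(b) yields $\mathcal S^{(\varepsilon)}_0 = \tfrac12 \int_0^\infty (\dot W^{(\varepsilon)}_t)^2 \, \dd t = I^C_{\Chat \smallsetminus \gamma[0,\varepsilon];\, \gamma(\varepsilon), p}(\gamma[\varepsilon,1])$; letting $\varepsilon \to 0$, the right side tends to $I^L(\gamma)$ by \eqref{def:Loewner_energy}, while one checks that $\mathcal S^{(\varepsilon)}_0 \to S_1(\gamma)$ --- its value being independent of the uniformizing coordinates, which degenerate as $\varepsilon \to 0$, once the renormalization terms are correctly matched --- so that $I^L(\gamma) = S_1(\gamma)$. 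Property (b) is the substantive point: under Loewner's equation $\partial_t g_t(z) = 2/(g_t(z) - W_t)$ the maps $\psi_t, \widetilde\psi_t$ evolve by composition with an explicit infinitesimal slit map, so $\partial_t \log \psi_t'$ and $\partial_t \log \widetilde\psi_t'$ are explicit meromorphic fields; inserting these into $\tfrac{\dd}{\dd t} \int \abs{(\log \psi_t')'}^2 \dd^2 z$ and integrating by parts via Green's theorem, the bulk terms from the two sides cancel and only the boundary contribution concentrated at the growing tip survives, which --- after the square-root regularization built into Loewner's equation --- evaluates to $-\tfrac12 \dot W_t^2$, the normalization term absorbing the remainder.

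\emph{Main obstacle.} The delicate step is (b): $\mathcal S^{(\varepsilon)}_t$ depends on the \emph{global} uniformizing maps of both complementary domains, which deform nonlocally under the flow, so both the differentiation under the integral and the analysis of $\log \psi_t'$ near the tip --- where $\psi_t'$ has a square-root-type singularity that must cancel the pole of the Loewner field exactly as in the derivation of Loewner's equation --- demand uniform estimates; these are available precisely in the analytic case, where $\eta_\varepsilon$ carries a collar of analyticity uniform on compact time intervals, which is why the reduction to analytic curves is essential rather than cosmetic. The remaining technical burdens --- fixing the renormalization in $\mathcal S^{(\varepsilon)}_t$ so that the Dirichlet integrals over the unbounded, slit complementary domains converge (the naive integral already diverges for the trivial vertical chord), and controlling the $t \to \infty$ and $\varepsilon \to 0$ limits --- are distortion-type estimates governed by the Weil--Petersson norm.
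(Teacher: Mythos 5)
This statement is imported verbatim from \cite[Thm.\,1.4]{W2}; the paper under review gives no proof of it, so your sketch has to be measured against the actual argument in \cite{W2} rather than against anything internal. Your high-level plan --- identify the right-hand side of \eqref{eq_disk_energy} with the universal Liouville action $S_1$ of \cite{TT06}, then prove $I^L=S_1$ by producing a functional whose derivative along the Loewner flow is $\pm\tfrac12 \dot W_t^2$ --- is indeed the strategy of \cite{W2}, so the route is not wrong. But as written this is a roadmap, not a proof: properties (a) and (b), on which everything rests, are explicitly ``granted,'' and (b) --- the exact cancellation of the bulk terms, the square-root analysis of $\log\psi_t'$ at the tip, and the calibration of the renormalization so that precisely $-\tfrac12\dot W_t^2$ survives --- \emph{is} the content of the theorem. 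Nothing in the proposal verifies any of it, nor is the chordal analogue of \eqref{eq_disk_energy} (the intermediate statement one actually differentiates) even written down.

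There is also a concrete logical flaw in your reduction step. You justify restricting to analytic $\gamma$ by asserting that ``both sides are finite exactly on the Weil--Petersson class.'' For $S_1$ this is Takhtajan--Teo; but for $I^L$ the statement that finite Loewner energy characterizes Weil--Petersson quasicircles is itself a corollary of Theorem~\ref{thm:intro_equiv_energy_WP} --- before the identity is proved one only knows that finite-energy loops are quasicircles --- so this step is circular. Similarly, continuity of $I^L$ and of the exterior Dirichlet integral along the equipotential exhaustion $\gamma_\rho=f(\rho S^1)$ is not free: the interior term converges by monotonicity, but the exterior uniformizing map of $\gamma_\rho$ is not obtained from $g$ by any rescaling, and its convergence in the Dirichlet norm of $\log g'$ is exactly the kind of statement whose known proofs go through the energy identity or the $T_0(1)$ machinery you are trying to reach. (In \cite{W2} the extension to general finite-energy curves is handled at the level of driving functions, not by equipotential approximation.) To make this a proof you would need to: state and prove the chordal identity in $(\mathbb H;0,\infty)$ by carrying out computation (b) explicitly, including the boundary contributions at the tip and at $\infty$ and the role of the $4\log\abs{f'(0)/g'(\infty)}$-type normalization; and then control the $\varepsilon\to 0$ limit in \eqref{def:Loewner_energy}, where the uniformizing coordinates degenerate. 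None of these steps is present.
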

The right-hand side of \eqref{eq_disk_energy} was introduced in \cite{TT06} under the name \emph{universal Liouville action}. % Its value does not depend on the choice of $f$ and $g$ as long as $g (\infty) = \infty$.}
The following variation of the Loewner under a conformal map will be crucial to our proof of Theorem~\ref{thm.main}.
\begin{theorem}[See {\cite[Thm.\,4.1]{W3}}]
    If $\gamma$ is a simple loop with finite energy  and $f : A \rightarrow f(A)$ is a conformal map on an annular neighborhood $A$ of $\gamma$, then 
\begin{equation}\label{eqn.loop.ener.measure}
I^{L}(f(\gamma))- I^{L} (\gamma)= 12 \mathcal{W}\left( \gamma , A^{c} \right) - 12 \mathcal{W}( f(\gamma) , f(A)^{c} ),
\end{equation}
    where $A^c := \hC \setminus A$.
\end{theorem}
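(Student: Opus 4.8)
The plan is to combine the universal Liouville action formula~\eqref{eq_disk_energy} for $I^L$ with the conformal restriction invariance of Werner's measure (its central charge being $0$), and to reduce the identity to an infinitesimal statement provable by a first-variation computation. I would first record three reductions. \emph{Möbius invariance}: since $I^L$ and $\mathcal W$ are both Möbius invariant, replacing $(\gamma,A,f)$ by $(M_0\gamma,\,M_0 A,\,M_1\circ f\circ M_0^{-1})$ for Möbius maps $M_0,M_1$ changes neither side, so we may normalize so that $\infty$ lies in the unbounded complementary component of each of $\gamma$ and $f(\gamma)$, outside $A$ and $f(A)$. \emph{Independence of $A$}: if $\overline B\subset A$ is a smaller annular neighborhood of $\gamma$, then writing $B^c=A^c\cup(A\setminus B)$ and using the restriction and conformal invariance of $\mathcal W$, the loops meeting $A\setminus B$ contribute equally to both sides and cancel, giving $\mathcal W(\gamma,A^c)-\mathcal W(f(\gamma),f(A)^c)=\mathcal W(\gamma,B^c)-\mathcal W(f(\gamma),f(B)^c)$; hence the right side of~\eqref{eqn.loop.ener.measure} does not depend on the choice of $A$. \emph{Cocycle rule}: if~\eqref{eqn.loop.ener.measure} holds for $f_1$ on $A$ and for $f_2$ on $f_1(A)$, summing the two instances gives it for $f_2\circ f_1$ on $A$. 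The identity holds trivially for Möbius $f$ (both sides vanish), so together with the cocycle rule a connectedness argument for univalent maps of $A$ reduces the general statement to the infinitesimal version: for a holomorphic vector field $v$ on $A$ and the flow $f_t=\mathrm{id}+tv+o(t)$,
\begin{equation*}
\frac{d}{dt}\Big|_{t=0} I^L\big(f_t(\gamma)\big)=-12\,\frac{d}{dt}\Big|_{t=0}\mathcal W\big(f_t(\gamma),\,f_t(A)^c\big).
\end{equation*}

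For the left side I would differentiate~\eqref{eq_disk_energy}. Let $\phi\colon\m D\to\Omega$ and $\psi\colon\m D^*\to\Omega^*$ be the normalized uniformizing maps of the two components of $\hC\setminus\gamma$, and $\tilde\phi_t,\tilde\psi_t$ those for $f_t(\gamma)$; near $S^1$ one has $\tilde\phi_t\circ\alpha_t=f_t\circ\phi$ and $\tilde\psi_t\circ\beta_t=f_t\circ\psi$ with $\alpha_t,\beta_t$ conformal in a neighborhood of $S^1$ and fixing $S^1$ setwise (Schwarz reflection). Differentiating the bulk integrals in~\eqref{eq_disk_energy} and integrating by parts (using that $v$ is holomorphic on a neighborhood of $\gamma$), the first variation reduces to a contour integral along $\gamma$; together with the variation of the logarithmic term, which contributes evaluations at $0$ and $\infty$, one obtains the classical first variation of the universal Liouville action (in the spirit of Takhtajan--Teo): $\frac{d}{dt}\big|_0 I^L(f_t(\gamma))$ equals a contour integral over $\gamma$ of an explicit local density in $v$ built from the Schwarzian data of $\phi$ (equivalently $\psi$) along $\gamma$.

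For the right side I would use the Brownian loop measure realization $\mathcal W(V_1,V_2)=\mu^{\BL}\big(\{\delta\colon\partial_\infty\delta\cap V_1\neq\emptyset,\ \partial_\infty\delta\cap V_2\neq\emptyset\}\big)$ and compute the first-order change of the finite mass $\mathcal W(f_t(\gamma),f_t(A)^c)$. Since $f_t$ is conformal on $A$ and $\mu^{\BL}$ is conformally invariant, transporting the Brownian loops by $f_t$ inside $A$ confines the $O(t)$ discrepancy to a neighborhood of $\gamma$, where the measure is controlled by the conformally invariant local behavior of the Brownian loop soup near a smooth curve; this yields a contour integral over $\gamma$ of a local density in $v$. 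The hard part will be to show this density equals, including the constant $12$, the one coming from $I^L$. I expect to do this by passing through the determinant of the Laplacian: the total mass of the Brownian loop measure corresponds to $-\log\det_\zeta\Delta$ \cite{LeJan2006det, Dubedat2009partition}, so the first variation of $\mathcal W(f_t(\gamma),f_t(A)^c)$ is a Polyakov--Alvarez-type conformal anomaly, which one matches against the determinant-of-Laplacian expression for $I^L$ given in~\cite{W2}; the factor $12$ then falls out of the normalizations, consistently with $-\kappa\,c(\kappa)/2\to 12$ as $\kappa\to 0^+$. This matching of the two first-variation densities is the main obstacle; the rest is bookkeeping once the reductions above are in place. (Alternatively, one can slit $\gamma$ at its root and deduce the loop identity from the conformal restriction property of the chordal Loewner energy by letting the slit shrink, the delicate point there being the convergence of the renormalized Brownian-loop contribution to the Werner's-measure term.)
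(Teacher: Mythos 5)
First, note that this statement is not proved in the present paper at all: it is imported verbatim from \cite[Thm.\,4.1]{W3}, so there is no internal proof to compare against. Judging your proposal on its own, the preparatory reductions are sound: M\"obius invariance of both sides, the independence of the identity from the choice of annular neighborhood (via $B^c = A^c \cup (A\setminus B)$ together with the restriction and conformal invariance of $\mathcal W$), and the cocycle rule under composition are all correct and genuinely useful. The reduction to an infinitesimal identity along a flow $f_t=\mathrm{id}+tv+o(t)$ is plausible, though it quietly assumes differentiability of $t\mapsto I^L(f_t(\gamma))$ and $t\mapsto\mathcal W(f_t(\gamma),f_t(A)^c)$ and a connectedness statement for univalent maps of an annulus, neither of which is free.

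The genuine gap is the step you yourself flag as ``the main obstacle'': identifying the first variation of $\mathcal W(f_t(\gamma),f_t(A)^c)$ with $-\tfrac{1}{12}$ times the Takhtajan--Teo variation of the universal Liouville action. As written, this is not an argument but a restatement of the theorem. Two specific problems: (i) the $\det_\zeta\Delta$ interpretation applies to the (regularized) \emph{total} mass of the Brownian loop measure on a domain, whereas $\mathcal W(V_1,V_2)$ is the mass of the \emph{outer boundaries} of loops hitting two prescribed sets; there is no off-the-shelf Polyakov--Alvarez anomaly formula for that quantity, and producing one is essentially equivalent to proving \eqref{eqn.loop.ener.measure}; (ii) it is not clear that $\frac{d}{dt}\big|_0\mathcal W(f_t(\gamma),f_t(A)^c)$ localizes to a contour integral of a local density along $\gamma$ --- the loops contributing to $\mathcal W(\gamma,A^c)$ are macroscopic objects joining $\gamma$ to $A^c$, and conformal invariance only transports them inside $A$, it does not by itself confine the $O(t)$ discrepancy to a neighborhood of $\gamma$. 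Your parenthetical alternative --- slitting the loop and passing through the chordal conformal restriction formula for the Loewner energy, then letting the slit shrink --- is in fact the route taken in \cite{W3}, where the renormalization at the merging marked points is handled by an explicit cancellation of derivative terms; if you want a complete proof, that is the branch to develop, rather than the determinant-of-Laplacian matching.
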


\subsection{Relation between $\Lambda^{\ast}$ and $\mathcal W$}

The main result of this section is Theorem \ref{thm.lambda.werner.compact}, where we prove a relation between the normalized Brownian loop measure $\Lambda^{\ast}$ and the corresponding quantity $\mathcal{W}$.  Corollary~\ref{cor.lambda.is.Werner}  will then be used to describe the Onsager--Machlup functional for SLE$_\kappa$ loop measures. First, we observe that although both $\Lambda^{\ast}$ and  $\mathcal{W}$ normalize Brownian loop measure, they do not equal each other. 
We add the short proof for completeness.

\begin{lemma}\label{lemma.werner.not.lambda}
    Let $K$ be a compact non-polar set and $\{K_{n}\}_{n\in \mathbb{N}}$ be a decreasing family of compact non-polar sets disjoint from $K$ such that $\cap_{n \in \mathbb N} K_n$ is a singleton $\{p\}$. We have 
\begin{align*}
    & \lim_{n\rightarrow \infty} \Lambda^{\ast} \left( K, K_{n} \right) = - \infty \quad  \text{and }\quad \lim_{n\rightarrow \infty} \mathcal{W} \left( K, K_{n} \right) = 0. 
\end{align*}
\end{lemma}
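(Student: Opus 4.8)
The plan is to treat the two limits separately, since they involve the two different normalizations of the Brownian loop measure. For the Werner part, the key point is that $\mathcal{W}(K, K_n)$ is the $\mu^{\BL}$-mass of Brownian loops $\delta$ whose outer boundary $\partial_\infty \delta$ meets both $K$ and $K_n$. As $n \to \infty$, the sets $K_n$ shrink to the single point $p \notin K$. Fix a point $z_0$ far from both $K$ and $p$ (or use $\infty$); by the monotone convergence theorem along the decreasing sequence, $\lim_n \mathcal{W}(K,K_n)$ is the $\mu^{\BL}$-mass of loops $\delta$ such that $\partial_\infty \delta$ meets $K$ and contains $p$ (i.e.\ $p \in \bigcap_n K_n \subset \overline{\partial_\infty \delta}$, equivalently $p \in \partial_\infty \delta$ since the outer boundary is closed). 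The mass of Brownian loops whose outer boundary passes through a fixed point $p$ is zero — this is the analogue for $\partial_\infty \delta$ of the classical fact that the mass of Brownian loops through a point is zero, and it follows because $\mathcal{W}$ is (a multiple of) the SLE$_{8/3}$ loop measure / Werner's measure, which like the Brownian loop measure assigns zero mass to loops passing through any fixed point (one can see this from conformal invariance plus finiteness of $\mathcal{W}(V_1,V_2)$ for disjoint $V_1, V_2$, by a Borel--Cantelli / countable covering argument around $p$). Hence $\lim_n \mathcal{W}(K,K_n)=0$.

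For the $\Lambda^\ast$ part, I would use Lemma~\ref{lem:lambda_*_restriction} to reduce to a finite Brownian-loop quantity plus a controlled term. Write $D$ for a fixed bounded domain with non-polar boundary containing $K$ and $K_n$ for all large $n$ (possible since the $K_n$ decrease to $p \notin K$; take $D$ to be, say, a large disk). Using M\"obius invariance of $\Lambda^\ast$ and then Lemma~\ref{lem:lambda_*_restriction} with a nested pair of domains, one can express $\Lambda^\ast(K, K_n)$ in terms of a reference value plus $\mathcal{B}(K, \text{(annular region)}; D)$-type terms; alternatively and more directly, note that $\Lambda^\ast(K,K_n) \le \Lambda^\ast(K, \mathbb D_\rho(p)^c) + \text{const}$ is not quite the right direction — instead I would bound $\Lambda^\ast(K, K_n)$ from above by comparing with $\Lambda^\ast(K, \overline{\mathbb D_{\rho_n}(p)})$ where $\rho_n \to 0$ is chosen with $K_n \subset \overline{\mathbb D_{\rho_n}(p)}$, using that enlarging the second set can only increase the set of loops counted, hence (from \eqref{eqn.lambda.ast}) increases $\Lambda^\ast$. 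So it suffices to show $\Lambda^\ast(K, \overline{\mathbb D_\rho(p)}) \to -\infty$ as $\rho \to 0$. This last statement is exactly the phenomenon that $\Lambda^\ast$ is \emph{not} induced by a measure: the $\log\log$ renormalization in \eqref{eqn.lambda.ast} is tuned to cancel the mass of loops in an annulus $\mathbb D_R \setminus \mathbb D_r$ around the removed disk, but when the second set is itself a tiny disk $\overline{\mathbb D_\rho(p)}$ the relevant mass of small loops clustering near $p$ behaves like $\log\log(1/\rho)$ with the \emph{wrong} (insufficient) constant, producing a divergence to $-\infty$. Concretely, using $z_0 = p$ in \eqref{eqn.lambda.ast}, $\Lambda^\ast(K, \overline{\mathbb D_\rho(p)}) = \lim_{r\to 0}(\mathcal B(K, \overline{\mathbb D_\rho(p)}; \mathbb D_r(p)^c) - \log\log(1/r))$, and since every loop counted must reach from $K$ down to the tiny disk of radius $\rho$ around $p$, as $\rho \to 0$ the finite quantity $\mathcal B(K, \overline{\mathbb D_\rho(p)}; \mathbb D_r(p)^c)$ for fixed small $r$ drops (it excludes loops that only reach partway), while the subtracted $\log\log(1/r)$ is fixed; a quantitative estimate on the mass of Brownian loops from $K$ reaching a distance $\rho$ of $p$ — which decays like a negative multiple of $\log(1/\rho)$ after the first renormalization, hence like $\log\log(1/\rho) \to -\infty$ — gives the claim. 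I would cite the estimates in \cite{LaurenceLawler2013} for the precise asymptotics of $\mathcal B$ near a point.

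The main obstacle I expect is the quantitative estimate in the $\Lambda^\ast$ part: one needs a clean bound showing that $\Lambda^\ast(K,\overline{\mathbb D_\rho(p)})$ (or equivalently $\mathcal B$ of loops joining $K$ to a $\rho$-neighborhood of $p$, suitably renormalized) tends to $-\infty$, at a rate controlled by $\log\log(1/\rho)$. This requires recalling from \cite{LaurenceLawler2013} (or re-deriving) the asymptotic behavior of the Brownian loop mass near a point — essentially that loops surrounding or passing close to $p$ contribute a mass $\sim \log\log(\cdot)$ with a known coefficient, so that replacing the removed point-neighborhood's scale from $r$ to $\rho$ shifts things by the right logarithmic amount. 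The Werner part is comparatively soft, relying only on the "no mass on loops through a point" property and monotone convergence. Once both limits are in hand, the lemma is immediate.
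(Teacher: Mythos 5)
Your treatment of the Werner part is correct and is exactly the paper's argument: monotone convergence along the decreasing sequence reduces the limit to $\mathcal{W}(\{p\},K)$, which vanishes because the mass of Brownian loops passing through a fixed point is zero (the outer boundary is contained in the trace of the loop, so no elaborate covering argument is needed). One small point to make explicit: to pass to the limit you should note that $\partial_\infty\delta$ is compact, so $\partial_\infty\delta\cap K_n\neq\emptyset$ for all $n$ forces $\partial_\infty\delta\cap\bigcap_n K_n\neq\emptyset$, i.e.\ $p\in\partial_\infty\delta$.

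The $\Lambda^{\ast}$ part as you have committed to it has a genuine gap. Your reduction to showing $\Lambda^{\ast}(K,\overline{\mathbb{D}_\rho(p)})\to-\infty$ via monotonicity of $\Lambda^{\ast}$ in the second argument is valid, but the remaining step is exactly as hard as the original statement, and the route you then propose --- extracting quantitative $\log\log$ asymptotics of the Brownian loop mass near a point from \cite{LaurenceLawler2013} --- is left entirely unproved; you flag it yourself as the main obstacle. This heavy machinery is unnecessary. The clean argument is the one you mention in passing and then abandon: apply Lemma~\ref{lem:lambda_*_restriction} with $D=K_{n+1}^c$ and $D'=K_n^c$ to get $\Lambda^{\ast}(K,K_n)-\Lambda^{\ast}(K,K_{n+1})=\mathcal{B}(K,K_n\setminus K_{n+1};K_{n+1}^c)$, and telescope (using restriction invariance of $\mu^{\BL}$) to obtain
\begin{equation*}
\Lambda^{\ast}(K,K_1)-\Lambda^{\ast}(K,K_n)=\mathcal{B}\left(K,\,K_1\setminus K_n;\,K_n^c\right).
\end{equation*}
As $n\to\infty$ this increases, by monotone convergence, to the $\mu^{\BL}$-mass of loops in $\hC\setminus\{p\}$ meeting both $K$ and $K_1\setminus\{p\}$; since a single point is polar this equals the full-plane mass $\mu^{\BL}(\mathcal{L}(K,K_1\setminus\{p\}))=+\infty$ (the large loops are no longer cut off by any domain boundary). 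Hence $\Lambda^{\ast}(K,K_n)\to-\infty$ with no asymptotic estimates required. I recommend replacing the quantitative sketch by this two-line telescoping argument.
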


\begin{proof}
    By Lemma~\ref{lem:lambda_*_restriction} we have that 
\begin{align*}
    & \Lambda^{\ast} \left( K, K_{n} \right) - \Lambda^{\ast} \left( K, K_{n+1} \right) 
    = \mathcal{B} \left( K, K_{n} \setminus K_{n+1}; K_{n+1}^c \right) < \infty ,
\end{align*}
    and hence 
\begin{align*}
    & \Lambda^{\ast} \left( K, K_{1} \right) - \Lambda^{\ast} \left( K, K_{n} \right) = \sum_{k=1}^{n-1} \mathcal{B} \left( K, K_{k}  \setminus K_{k+1} ;  K_{k+1}^c \right) = \mathcal{B} \left( K, K_{1} \setminus K_n;  K_{n}^c \right) ,
\end{align*}
    and the latter diverges to $+\infty$ as $n \rightarrow \infty$.  Hence $\lim_{n\rightarrow \infty} \Lambda^{\ast} \left( K, K_{n} \right) = - \infty$.
 
   By monotone convergence theorem,
\begin{align*}
    & \lim_{n\rightarrow \infty }\mathcal{W} \left( K_{n}, K \right) = \mathcal{W} \left( \bigcap_{n\in \mathbb{N}} K_{n}, K \right) = \mathcal{W} \left( \{p\}, K \right) =0.
\end{align*}
The last equality holds since the probability of a two-dimensional Brownian motion hitting a singleton is zero.
\end{proof}

\begin{theorem}\label{thm.lambda.werner.compact}
    Let $A$ be an annulus, and $K\subset A$ be a connected compact subset not separating the two boundaries of $A$, and $f: A\rightarrow f(A)$ be a conformal map. Then
    \begin{align}
       &  \mathcal{W} \left( K, A^{c} \right) - \mathcal{W} ( f(K), f(A)^{c} )    =\Lambda^{\ast} \left( K, A^{c} \right) - \Lambda^{\ast} ( f(K), f(A)^{c} ) \label{eqn.lambda.is.werner.compact}
    \end{align}
\end{theorem}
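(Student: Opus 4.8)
The plan is to prove the identity by interpolating between the two sides via an auxiliary quantity and exploiting the restriction-type formulas available for both $\mathcal{W}$ and $\Lambda^{\ast}$. The key structural observation is that both $\mathcal{W}(\cdot,\cdot)$ and $\Lambda^{\ast}(\cdot,\cdot)$ are built from the Brownian loop measure and both satisfy restriction relations: $\Lambda^{\ast}$ through Lemma~\ref{lem:lambda_*_restriction}, and $\mathcal{W}$ through the restriction invariance of $\mu^{\BL}$ itself (since $\mathcal{W}$ is literally a mass under $\mu^{\BL}$ of a set of loops, cut out by the condition that the outer boundary meets $K$ and $A^c$). So I would first reduce both differences on the two sides to quantities of the form ``mass of Brownian loops in a fixed domain that touch $K$ and some complementary set,'' where the normalization (the $\log\log$ subtraction) cancels between the two terms. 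Concretely, write $\Lambda^{\ast}(K, A^c) - \Lambda^{\ast}(f(K), f(A)^c)$ by choosing a large disk $\mathbb D_R$ containing everything and using \eqref{eqn.lambda.ast.infty}: the two $\log\log R$ terms cancel, leaving a difference of two finite Brownian-loop masses in $\mathbb D_R$. On the $\mathcal{W}$ side the masses are already finite with no renormalization needed.

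Next I would try to express both sides in terms of a single natural object, namely a renormalized count of Brownian loops that touch $K$ but are ``trapped'' in a way that feels the annular geometry. The cleanest route is probably to bring in the identity \eqref{eqn.loop.ener.measure} from \cite{W3}, which already equates $12\mathcal{W}(\gamma, A^c) - 12\mathcal{W}(f(\gamma), f(A)^c)$ with the conformal variation $I^L(f(\gamma)) - I^L(\gamma)$ of the Loewner energy for a \emph{loop} $\gamma \subset A$; and to prove (or cite) the analogous conformal-variation statement for $\Lambda^{\ast}$, which is exactly the content needed to make the two sides agree. Since $K$ is only assumed connected compact and not separating the boundary components of $A$, I would first handle the case where $K$ is itself an analytic simple loop (or approximate $K$ by an annular neighborhood's core curve), prove the identity there using the loop-energy variation formulas, and then pass to general such $K$ by a monotonicity/limiting argument: shrinking an annular neighborhood of a loop down to the loop, and more generally exhausting a connected non-separating compact set. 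The Brownian loop measure's countable additivity and the finiteness statements (e.g.\ \cite[Lem.\,4]{NacuWerner2011}, Lemma~\ref{lem:lambda_*_restriction}) give the needed continuity.

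The technical heart, and the step I expect to be the main obstacle, is establishing that $\Lambda^{\ast}$ obeys a conformal-variation formula of the same shape as \eqref{eqn.loop.ener.measure} — i.e.\ that $\Lambda^{\ast}(K, A^c) - \Lambda^{\ast}(f(K), f(A)^c)$ depends on $f$ only through the conformal geometry in a way matching $\mathcal{W}$. The subtlety is that $\Lambda^{\ast}$ counts \emph{all} Brownian loops touching $K$ and $A^c$, including loops that wind around $K$ or have complicated topology, whereas $\mathcal{W}$ only sees outer boundaries; so the difference $\Lambda^{\ast} - \mathcal{W}$ counts (with sign) loops whose outer boundary misses $K$ or $A^c$ even though the loop itself meets both. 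I would need to argue that, under the conformal map $f$ restricted to the annulus $A$, this ``difference'' class of loops transforms by an honest (non-renormalized, conformally invariant) Brownian-loop mass — because such loops, having nontrivial body meeting both $K$ and $A^c$ but trivial outer-boundary trace, are confined near the annular region and their measure is governed by conformal invariance of $\mu^{\BL}$ on that region. Making this decomposition precise — partitioning $\mathcal{L}(K, A^c)$ according to whether the outer boundary (seen from a suitable basepoint) meets $K$ and $A^c$, and checking that the ``non-outer-boundary'' contributions on both sides transform identically and hence cancel in the difference — is where the real work lies; once that is in place, the remaining ``outer-boundary'' contributions are exactly $\mathcal{W}(K,A^c)$ and $\mathcal{W}(f(K),f(A)^c)$, and the identity follows. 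I would also need to take some care with the basepoint $z_0$ used to define the outer boundary in \eqref{eqn.werner.measure} and with the point $z_0$ or $\infty$ used in the definition of $\Lambda^{\ast}$, choosing them compatibly (e.g.\ a point outside $f(A) \cup A$) so that the cancellations are transparent.
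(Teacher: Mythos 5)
Your overall instinct---that the identity should come from comparing the ``outer--boundary'' count $\mathcal W$ with the full count behind $\Lambda^{\ast}$ and cancelling the discrepancy between the two configurations---is reasonable, but the mechanism you propose for the cancellation does not work, and the alternative route you sketch first is circular. Invoking \eqref{eqn.loop.ener.measure} and then ``the analogous conformal-variation statement for $\Lambda^{\ast}$'' begs the question: that analogous statement \emph{is} the theorem (up to the reduction from loops to compact sets), so nothing is gained. The substantive gap is in your treatment of the ``difference class,'' the loops meeting both $K$ and $A^{c}$ whose outer boundary fails to meet both. You assert these are ``confined near the annular region'' so that their mass transforms by plain conformal invariance of $\mu^{\BL}$ on $A$. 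This is false: the typical such loop is a \emph{large} loop that swallows $K$ (so that $K$ lies in a bounded complementary component and the outer boundary misses it) while still touching $A^{c}$. These loops exist at every scale, their total mass is infinite---they are exactly the loops responsible for the $\log\log$ divergence that forces the renormalization in \eqref{eqn.lambda.ast} and for the discrepancy exhibited in Lemma~\ref{lemma.werner.not.lambda}---and they are not contained in $A$, hence not in the domain of $f$. There is therefore no conformally invariant pairing between the difference class for $(K,A^{c})$ and that for $(f(K),f(A)^{c})$, and the claimed cancellation of the ``non-outer-boundary contributions'' is precisely the hard content of the theorem, not a consequence of restriction/conformal invariance.

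For comparison, the paper's proof avoids confronting the big loops directly by a different device: pick $z_{0}\in A\setminus K$, attach sticks joining $K$ to $\partial\mathbb D_{\varepsilon}(z_{0})$ and joining $\partial\mathbb D_{\varepsilon}(z_{0})$ to both components of $A^{c}$. In the simply connected domain $\mathbb D_{\varepsilon}(z_{0})^{c}$ the two enlarged sets are attached to the boundary, so every Brownian loop touching both has outer boundary (seen from $z_{0}$) touching both; hence $\mathcal W=\mathcal B$ exactly for that configuration, with no infinite difference class to control. The conformal-invariance cancellation is then applied only to loops confined to $A\setminus\mathbb D_{\varepsilon}(z_{0})$, where $f$ is actually defined, and the $\log\log$ renormalization of $\Lambda^{\ast}$ emerges from shrinking $\varepsilon\to0$ around the interior point $z_{0}$ (the factor $|f'(z_{0})|$ being harmless at the $\log\log$ scale), rather than from a large disk at infinity. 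Finally the stick contributions are removed by sliding $z_{0}$ into $K$. If you want to salvage your approach, you would need an independent argument that the difference of the two difference-class masses in $\mathbb D_{R}$ tends to $0$ as $R\to\infty$; as it stands, that step is unsupported.
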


\begin{remark}
    It is easy to see that the proof of Theorem \ref{thm.lambda.werner.compact} also works when $A$ is a simply connected domain. In this case, $K$ can be chosen to be any connected compact subset.
\end{remark}

% \begin{figure}\label{figure.sticks}
% \begin{center} 
% \epsfxsize=75 ex
% \epsfbox{pic.eps} 
% \caption{\small \sl Maps in the proof of Theorem \ref{lemma.lambda.werner.compact}}  
% \label{fig:Stupendous}  
% \end{center}  
% \end{figure}

\begin{proof}
    Let $z_{0} \in A \setminus K$ be fixed, and $\varepsilon$ be  small enough such that $\mathbb D_{\varepsilon} (z_{0} ) \subset A\setminus K$. A subset $I \subset A$ is a stick if $I = g( [0,1])$ for some continuous function $g: [0,1] \rightarrow A$.   Let us consider a stick $I(z_{0})$   connecting $K$ to $z_0$, and set   $I^\varepsilon(z_0) := I(z_0) \setminus \mathbb D_{\varepsilon}(z_{0})$. Moreover, let $J_{1}^{\varepsilon} (z_{0})$ and $J_{2}^{\varepsilon} (z_{0})$  be two sticks disjoint from $I^{\varepsilon} (z_{0})$ and $K$, and connecting $\partial \mathbb D_{\varepsilon}(z_{0})$ to the inner and outer boundary of $A$ respectively. This is possible since $K$ does not disconnect the two boundary components of $A$.
    
    We write $\mathcal{W} \left( K_1, K_2; D \right)$ for the total mass of Werner's measure of loops in $D$ intersecting both $K_1$ and $K_2$.  Using \eqref{eqn.werner.measure} one has  
    \begin{equation}\label{eqn.split}
    \mathcal{W} \left(K \cup  I^{\varepsilon} (z_{0}), A^{c} \cup \bigcup_{i=1}^{2} J_{i}^{\varepsilon} (z_{0})  ; \, \mathbb  D_{\varepsilon} (z_{0})^c  \right) = \mathcal{B} \left(K \cup   I^{\varepsilon} (z_{0}), A^{c} \cup \bigcup_{i=1}^{2} J_{i}^{\varepsilon} (z_{0})  ; \, \mathbb  D_{\varepsilon} (z_{0})^c  \right),
\end{equation}
    since $\mathbb  D_{\varepsilon} (z_{0})^c$ is a simply connected domain with non-polar boundary and the sets $K \cup  I^{\varepsilon} (z_{0})$ and $A^{c} \cup \bigcup_{i=1}^{2} J_{i}^{\varepsilon} (z_{0}) $  are attached to the boundary of $\mathbb  D_{\varepsilon} (z_{0})^c$. We can write \eqref{eqn.split} as 
\begin{align*}
    & \mathcal{W} \left( K \cup  I^{\varepsilon} (z_{0}), A^{c}  ; \,  \mathbb  D_{\varepsilon} (z_{0})^c  \right) + \mathcal{W} \left(K \cup  I^{\varepsilon} (z_{0}), \bigcup_{i=1}^{2} J_{i}^{\varepsilon} (z_{0}) ; \, A \setminus \mathbb D_{\varepsilon}(z_{0}) \right)
    \\
    & =  \mathcal{B} \left( K \cup  I^{\varepsilon} (z_{0}), A^{c}  ; \, \mathbb  D_{\varepsilon} (z_{0})^c  \right) + \mathcal{B} \left(K \cup   I^{\varepsilon} (z_{0}), \bigcup_{i=1}^{2} J_{i}^{\varepsilon} (z_{0}) ; \,  A \setminus  \mathbb D_{\varepsilon}(z_{0})   \right),
\end{align*}
    and similarly 
\begin{align*}
    & \mathcal{W} \left( f(K \cup I^{\varepsilon} (z_{0})), f(A)^{c}  ; \, f(\mathbb D_{\varepsilon} (z_{0}) )^c \right) \\
    & \qquad + \mathcal{W} \left(f(K \cup I^{\varepsilon} (z_{0})), \bigcup_{i=1}^{2} f(J_{i}^{\varepsilon} (z_{0}) ); \, f(A \setminus \mathbb D_{\varepsilon}(z_{0}))  \right)
    \\
    & =\mathcal{B} \left( f(K \cup  I^{\varepsilon} (z_{0})), f(A)^{c}  ; \,f(\mathbb D_{\varepsilon} (z_{0}) )^c \right) \\
    &\qquad + \mathcal{B} \left(f(K \cup I^{\varepsilon} (z_{0})), \bigcup_{i=1}^{2} f(J_{i}^{\varepsilon} (z_{0}) ); \, f(A \setminus \mathbb D_{\varepsilon}(z_{0})) \right).
\end{align*}
    By conformal invariance of both Werner's measure and the Brownian loop measure, one has 
\begin{align*}
    \mathcal{W} \left(K \cup  I^{\varepsilon} (z_{0}), \bigcup_{i=1}^{2} J_{i}^{\varepsilon} (z_{0}) ; \, A \setminus  \mathbb D_{\varepsilon} (z_{0})   \right) &=\mathcal{W} \left(f(K \cup I^{\varepsilon} (z_{0})), \bigcup_{i=1}^{2} f(J_{i}^{\varepsilon} (z_{0}) ); \, f(A \setminus \mathbb D_{\varepsilon} (z_{0}))  \right) ,
    \\
     \mathcal{B} \left(K \cup   I^{\varepsilon} (z_{0}), \bigcup_{i=1}^{2} J_{i}^{\varepsilon} (z_{0}) ; \, A \setminus  \mathbb D_{\varepsilon} (z_{0})  \right) & = \mathcal{B} \left(f(K \cup I^{\varepsilon} (z_{0})), \bigcup_{i=1}^{2} f(J_{i}^{\varepsilon} (z_{0}) ); \,  f(A \setminus \mathbb D_{\varepsilon}(z_{0}))  \right),
\end{align*}
    and hence by \eqref{eqn.split}  it follows that 
\begin{align}
    &  \mathcal{W} \left(K \cup   I^{\varepsilon} (z_{0}), A^{c}  ; \,  \mathbb  D_{\varepsilon} (z_{0})^c  \right)  - \mathcal{W} \left(f(K \cup  I^{\varepsilon} (z_{0})), f(A)^{c}  ; \, f(\mathbb D_{\varepsilon} (z_{0})  )^c\right) \label{eqn.difference}
    \\
    & =   \mathcal{B} \left(K \cup   I^{\varepsilon} (z_{0}), A^{c}  ; \,  \mathbb D_{\varepsilon} (z_{0})^c \right)  - \mathcal{B} \left(f(K \cup I^{\varepsilon} (z_{0})), f(A)^{c}  ; \, f(\mathbb D_{\varepsilon} (z_{0}))^c\right). \notag
\end{align}
  
    Now, we will get rid of the sticks $I(z_0)$. 
    We decompose 
\begin{align*}
    &  \mathcal{W} \left(K \cup  I^{\varepsilon} (z_{0}), A^{c}  ; \, \mathbb D_{\varepsilon} (z_{0})^c  \right) =  \mathcal{W} \left(K, A^{c}  ; \, \mathbb D_{\varepsilon} (z_{0})^c  \right) +   \mathcal{W} \left(    I^{\varepsilon} (z_{0}), A^{c}  ; \,  ( \mathbb D_{\varepsilon} (z_{0}) \cup K )^c  \right),
\end{align*}
    and \eqref{eqn.difference} becomes
\begin{align}
    &  \mathcal{W} \left(K, A^{c}  ; \, \mathbb D_{\varepsilon} (z_{0})^c  \right)- \mathcal{W} \left(f(K), f(A)^{c}  ; \, f(\mathbb D_{\varepsilon} (z_{0}) )^c \right)  \label{eqn.difference2}
    \\
    & \qquad  +\mathcal{W} \left(    I^{\varepsilon} (z_{0}), A^{c}  ; \, ( \mathbb D_{\varepsilon} (z_{0}) \cup K )^c  \right)  - \mathcal{W} \left(  f(I^{\varepsilon} (z_{0})), f(A)^{c}  ; \,  f(\mathbb D_{\varepsilon} (z_{0}) \cup K)^c  \right) \notag
    \\
    = &\mathcal{B} \left(K, A^{c}  ; \, \mathbb D_{\varepsilon} (z_{0})^c  \right)- \mathcal{B} \left(f(K), f(A)^{c}  ; \,  f(\mathbb D_{\varepsilon} (z_{0}) )^c\right)  \notag
    \\
    & \qquad  +\mathcal{B} \left(    I^{\varepsilon} (z_{0}), A^{c}  ; \,  ( \mathbb D_{\varepsilon} (z_{0}) \cup K )^c  \right)  - \mathcal{B} \left(    f(I^{\varepsilon} (z_{0})), f(A)^{c}  ; \,   f(\mathbb D_{\varepsilon} (z_{0}) \cup K) ^c  \right) \notag. 
\end{align}
   % We will show later in the proof that 
    By monotone convergence, it follows that
\begin{align}
    & \lim_{\varepsilon \rightarrow 0}  \mathcal{W} \left(I^{\varepsilon} (z_{0}), A^{c}  ; \, ( \mathbb D_{\varepsilon} (z_{0}) \cup K )^c  \right) = \mathcal{W} \left(  I (z_{0}), A^{c}  ; \,  ( K \cup \{ z_{0} \} )^c   \right) =  \mathcal{W} \left(  I (z_{0}), A^{c}  ; \,  K ^c   \right) , \label{eqn.claim.werner}
    \\
    & \lim_{\varepsilon \rightarrow 0}  \mathcal{B} \left(  I^{\varepsilon} (z_{0}), A^{c}  ; \, ( \mathbb D_{\varepsilon} (z_{0}) \cup K )^c\right) = \mathcal{B} \left(  I (z_{0}), A^{c}  ; \,  K ^c   \right). \label{eqn.claim.loop}
\end{align}
Taking the limit as $\varepsilon \rightarrow 0$, by \eqref{eqn.claim.werner}  the left-hand-side of  \eqref{eqn.difference2} becomes
\begin{align}\label{eq:4_terms_W}
    & \mathcal{W} \left( K, A^{c} \right) - \mathcal{W} \left( f(K), f(A)^{c} \right)  +\mathcal{W} \left(  I (z_{0}), A^{c}  ; \,  K^c   \right)- \mathcal{W} \left( f(I (z_{0})), f(A)^{c}  ; \, f(K)^c  \right).
\end{align}
    The right-hand side  of \eqref{eqn.difference2}  will give terms in $\Lambda^{\ast}$. Note that $f(\mathbb D_{\varepsilon} (z_{0}))$ is comparable to  $\mathbb D_{a \varepsilon} ( f(z_{0}) )$ for  $a =  |f'(z_0)| > 0$ and $\varepsilon$ small enough since $f$ is a conformal map. We can then write the right-hand-side of \eqref{eqn.difference2} as
\begin{align*}
    & \mathcal{B} \left(K, A^{c}  ; \,  \mathbb D_{\varepsilon} (z_{0})^c  \right) - \log \log \frac{1}{\varepsilon} + \log \log \frac{1}{a \varepsilon} -\mathcal{B} \left(f(K), f(A)^{c}  ; \,  f(\mathbb D_{\varepsilon} (z_{0}) )^c \right) 
    \\
    & + \mathcal{B} \left(  I^{\varepsilon} (z_{0}), A^{c}  ; \,  ( \mathbb D_{\varepsilon} (z_{0}) \cup K )^c  \right)  - \mathcal{B} \left( f(I^{\varepsilon} (z_{0})), f(A)^{c}  ; \,  ( f(\mathbb D_{\varepsilon} (z_{0})) \cup f(K) )^c  \right) \\
    & + \log \frac{\log \varepsilon }{ \log a + \log \varepsilon},
\end{align*}
    and if we take the limit as $\varepsilon \rightarrow 0$ we get  
\begin{align}\label{eq:4_terms_Lambda}
    \Lambda^{\ast} \left( K, A^{c} \right) - \Lambda^{\ast} ( K, f(A)^{c} ) + \mathcal{B} \left(   I (z_{0}), A^{c}  ; \,  K^c   \right)- \mathcal{B} \left(  f( I (z_{0}) ), f(A)^{c}  ; \,  f(K)^c  \right),
\end{align}
    by the definition of $\Lambda^{\ast}$ and  \eqref{eqn.claim.loop}.

Finally, let $(z_n)$ be a sequence going to $K$ along $I(z_0)$. The above results replacing  $I(z_0)$ by $I(z_n)$ being the path from $z_n$ to $K$ along $I(z_0)$ still hold.  By monotone convergence, we have that 
$$\mathcal{W} \left(  I (z_{n}), A^{c}  ; \,  K^c   \right) \xrightarrow[]{n \to \infty} 0. $$
Similarly for $\mathcal{W} \left( f(I (z_{n})), f(A)^{c}  ; \, f(K)^c  \right)$,  $ \mathcal{B} \left(   I (z_{n}), A^{c}  ; \,  K^c   \right)$ and $\mathcal{B} \left(  f( I (z_{n}) ), f(A)^{c}  ; \,  f(K)^c  \right)$.
We obtain that
$$\mathcal{W} \left( K, A^{c} \right) - \mathcal{W} \left( f(K), f(A)^{c} \right)  =  \Lambda^{\ast} \left( K, A^{c} \right) - \Lambda^{\ast} ( K, f(A)^{c} )  $$
from the equality between \eqref{eq:4_terms_W} and \eqref{eq:4_terms_Lambda} which concludes the proof.
   \end{proof}

\begin{corollary}\label{cor.lambda.is.Werner}
Let $\gamma$ be a non-contractible simple loop separating the two boundaries of an annulus $A$. Then 
\begin{equation}\label{eqn.lamda.is.Werner}
\Lambda^{\ast} (\gamma , A^{c} )-\Lambda^{\ast} (f(\gamma) , f(A)^{c} ) = \mathcal{W} (\gamma ,  A^{c} )-\mathcal{W} (f(\gamma) , f(A)^{c} ).
\end{equation}
for any conformal map $f:A \rightarrow f(A)$.
\end{corollary}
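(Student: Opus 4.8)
The plan is to deduce Corollary~\ref{cor.lambda.is.Werner} from Theorem~\ref{thm.lambda.werner.compact} by swapping the roles of the "core" set and the "boundary" set, using the conformal invariance built into both $\Lambda^\ast$ and $\mathcal W$. In Theorem~\ref{thm.lambda.werner.compact} the compact set $K$ is the \emph{non-separating} object sitting inside the annulus $A$, while here $\gamma$ is precisely the \emph{separating} core of $A$; so Theorem~\ref{thm.lambda.werner.compact} does not apply directly with $K = \gamma$. However, $\gamma$ divides $A$ into two sub-annuli, and the complement $A^c$ has two components $E_0$ (inside) and $E_\infty$ (containing $\infty$), each of which is a connected compact set in $\hC$ that does \emph{not} separate the two boundary components of the annulus $\hC \setminus \gamma$ lying "on the other side." The idea is to apply Theorem~\ref{thm.lambda.werner.compact} (or rather the simply-connected version noted in the remark after it) to these pieces.

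\textbf{Step 1.} Fix the conformal map $f:A\to f(A)$. Extend it slightly: since $\gamma$ is a non-contractible simple loop separating the boundaries of $A$, both $\Lambda^\ast(\gamma, A^c)$ and $\mathcal W(\gamma, A^c)$ only see loops meeting $\gamma$ and $A^c$, i.e.\ loops that cross one of the two sub-annuli of $A\setminus\gamma$. Decompose $A^c = E_0 \sqcup E_\infty$ with $E_0$ the bounded component and $E_\infty \ni \infty$. A loop meeting both $\gamma$ and $A^c$ meets either $E_0$ or $E_\infty$ (or both), so by inclusion–exclusion
\begin{align*}
\mathcal W(\gamma, A^c) &= \mathcal W(\gamma, E_0) + \mathcal W(\gamma, E_\infty) - \mathcal W(\gamma, E_0, E_\infty),
\end{align*}
where the last term is the mass of loops meeting all three of $\gamma, E_0, E_\infty$, and similarly for $\Lambda^\ast$ — except that $\Lambda^\ast$ is not a measure, so this inclusion–exclusion needs to be run at the level of the truncated quantities $\mathcal B(\,\cdot\,; \mathbb D_\varepsilon(z_0)^c)$ before passing to the limit, which is legitimate since $\mathcal B$ \emph{is} a genuine measure. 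The point is that a loop meeting all three of $\gamma, E_0, E_\infty$ must wind around the annulus, hence has uniformly large diameter, so the $-\log\log(1/r)$ renormalization affects only the "$\mathcal W(\gamma, E_0)$-type" and "$\mathcal W(\gamma, E_\infty)$-type" families — and in fact the three-set term is finite and transforms by plain conformal invariance.

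\textbf{Step 2.} For each of the two two-set terms, apply Theorem~\ref{thm.lambda.werner.compact} with the core set taken to be $E_0$ (resp.\ $E_\infty$) and the ambient annulus being an annular neighborhood of $E_0$ whose outer boundary lies in $A^c$ — concretely, the annulus bounded by $\gamma$ and the inner boundary of $A$ (resp.\ $\gamma$ and the outer boundary of $A$). In that setup $E_0$ is a connected compact set not separating the two boundaries, $\gamma$ plays the role of "$A^c$" in the statement, and $f$ restricted to a neighborhood is the conformal map. Theorem~\ref{thm.lambda.werner.compact} then gives
\[
\mathcal W(\gamma, E_0) - \mathcal W(f(\gamma), f(E_0)) = \Lambda^\ast(\gamma, E_0) - \Lambda^\ast(f(\gamma), f(E_0)),
\]
and likewise for $E_\infty$ (here one may need a \Mo\ transformation sending $\infty$ to a finite point first, using the \Mo\ invariance of both $\Lambda^\ast$ and $\mathcal W$, so that $E_\infty$ becomes a genuine compact set in $\mathbb C$).

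\textbf{Step 3.} Combine. Subtract the two "energy-difference" identities of Step 2 and add back the three-set correction term, which by conformal invariance of the Brownian loop measure satisfies $\mathcal B_{\mathbb D_\varepsilon(z_0)^c}(\gamma, E_0, E_\infty) - (\text{pushforward}) \to 0$ in the appropriate sense — more precisely it contributes equally to the $\mathcal W$ side and the $\Lambda^\ast$ side and hence cancels in the difference $\big(\mathcal W(\gamma,A^c) - \mathcal W(f(\gamma),f(A)^c)\big) - \big(\Lambda^\ast(\gamma,A^c) - \Lambda^\ast(f(\gamma),f(A)^c)\big)$. This yields \eqref{eqn.lamda.is.Werner}.

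\textbf{Main obstacle.} The delicate point is Step 1: carefully justifying the inclusion–exclusion decomposition at the level of $\Lambda^\ast$, i.e.\ controlling \emph{which} subfamily of loops carries the $\log\log$ divergence and checking that the "three-set" family of large winding loops has finite mass and transforms trivially. One must work with the finite quantities $\mathcal B(\cdot; \mathbb D_\varepsilon(z_0)^c)$ throughout, split them by inclusion–exclusion (valid since these are true measures), match each piece to the corresponding $\mathcal W$-piece via the argument already used in the proof of Theorem~\ref{thm.lambda.werner.compact} (sticks, $\varepsilon \to 0$, then pushing the stick endpoint to the core), and only subtract the $\log\log(1/\varepsilon)$ terms at the very end — observing that exactly one such term appears for $E_0$ and one for $E_\infty$, whose sum reconstitutes the single renormalization in $\Lambda^\ast(\gamma, A^c)$ after accounting for the fact that $\gamma$ separates. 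An alternative, possibly cleaner route that avoids the inclusion–exclusion bookkeeping entirely: observe that $\gamma$ itself \emph{is} a connected compact subset of the larger annulus $A' \supset A$ obtained by thickening $A$ slightly, in which $\gamma$ does not separate the boundaries (since $A'$'s boundaries are strictly outside $A$'s), apply Theorem~\ref{thm.lambda.werner.compact} with $K=\gamma$ and ambient annulus $A'$ to get the identity with $A'$ in place of $A$, and then use Lemma~\ref{lem:lambda_*_restriction} together with the restriction property of $\mathcal W$ (which, being a genuine measure, restricts cleanly) to transfer from $A'^c$ back to $A^c$ — the extra Brownian-loop terms $\mathcal B(\gamma, A'\setminus A; \cdot)$ again appearing identically on both sides and cancelling. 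I expect this second route to be the one actually used, with the only real check being that $\gamma$, viewed inside $A'$, genuinely fails to separate $A'$'s two boundary components and that the conformal map $f$ extends to the relevant neighborhood.
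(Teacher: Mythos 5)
Both routes you propose run into genuine obstructions, and the key idea of the paper's proof is missing. Your ``cleaner'' alternative route fails at its very first step: a non-contractible simple loop $\gamma$ in $A$ still separates the two boundary components of \emph{every} larger annulus $A'\supset A$, since enlarging the annulus does not change the homotopy class of $\gamma$ (the inner boundary of $A'$ still lies in the bounded complementary component of $\gamma$ and the outer one in the unbounded component). So the hypothesis of Theorem~\ref{thm.lambda.werner.compact} can never be met with $K=\gamma$, no matter how you thicken; moreover $f$ is only given on $A$, so you could not use $A'$ as the ambient domain anyway. Your main route has a similar type-checking problem in Step~2: $E_0$, the bounded component of $A^{c}$, is \emph{not} contained in the sub-annulus of $A$ you name (it is disjoint from $A$ altogether), and $f$ is not defined on $E_0$, so ``$f(E_0)$'' has no meaning. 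The only sensible candidate is $E_0':=$ the bounded component of $f(A)^{c}$, but Theorem~\ref{thm.lambda.werner.compact} requires a conformal map on a neighbourhood of the compact set $K$ carrying $K$ to its image, and no conformal map taking $E_0$ to $E_0'$ is available; hence the identity $\mathcal W(\gamma,E_0)-\mathcal W(f(\gamma),E_0')=\Lambda^\ast(\gamma,E_0)-\Lambda^\ast(f(\gamma),E_0')$ does not follow from the theorem. The same objection applies to your cancellation of the three-set correction term, which would require $[\mathcal W-\Lambda^\ast](E_0,E_\infty)=[\mathcal W-\Lambda^\ast](E_0',E_\infty')$ --- again an instance of the theorem with a map that does not exist. (A further slip: since every loop meeting both $E_0$ and $E_\infty$ must cross $\gamma$, the three-set family coincides with $\mathcal L(E_0,E_\infty)$, whose $\mu^{\BL}$-mass on $\hC$ is infinite, so it is not ``finite and transforming by plain conformal invariance''; it carries its own $\log\log$ renormalization.)

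The idea you are missing is much simpler: instead of decomposing the complement or enlarging the annulus, \emph{cut a small arc out of $\gamma$}. The paper sets $K_n=\gamma([0,1-1/n])$; this is a connected compact subset of $A$ that does not separate the two boundary components (an arc is contractible in $A$), so Theorem~\ref{thm.lambda.werner.compact} applies verbatim with the given map $f$, which is defined on a neighbourhood of $K_n$. One then lets $n\to\infty$: the $\mathcal W$-terms converge by monotone convergence, and for the $\Lambda^\ast$-terms Lemma~\ref{lem:lambda_*_restriction} gives $\Lambda^\ast(\gamma,A^c)-\Lambda^\ast(K_n,A^c)=\mathcal B(R_n,A^c;\hC\setminus K_n)$ with $R_n=\gamma([1-1/n,1])$, which tends to $\mathcal B(\{\gamma(1)\},A^c;\hC\setminus\gamma)=0$ because a single point is polar. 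I would encourage you to rework the argument along these lines.
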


\begin{proof}
We parametrize $\gamma$ continuously by $[0,1]$ such that $\gamma (0) = \gamma(1)$.
    Let $K_{n} := \gamma ([0,1-1/n])$ and $R_n := \gamma ([1-1/n, 1])$. %, where $\gamma_{n} (t) := \gamma \left(\frac{1}{n} + t \left(1-\frac{2}{n} \right)  \right)$, for $0\leqslant t \leqslant 1$. 
    Then $K_{n}$ is a compact connected set not separating the two boundaries of $A$. Hence by \eqref{eqn.lambda.is.werner.compact}  one has that, for any $n\in \mathbb N$,
\begin{align*}
    & \mathcal{W} \left( K_{n}, A^{c} \right) - \mathcal{W} ( f(K_{n}), f(A)^{c} ) =\Lambda^{\ast} \left( K_{n}, A^{c} \right) - \Lambda^{\ast} ( f(K_{n}), f(A)^{c} )    .
\end{align*}
    From monotone convergence, we have 
\begin{align*}
 \lim_{n\rightarrow \infty}  \mathcal{W} \left( K_{n}, A^{c} \right) =  \mathcal{W} \left( \gamma, A^{c} \right) \quad \text{and } \quad \lim_{n\rightarrow \infty} \mathcal{W} ( f(K_{n}), f(A)^{c} ) =  \mathcal{W} \left( f(\gamma), f(A)^{c} \right).
\end{align*}
     For the normalized Brownian loop measure, Lemma~\ref{lem:lambda_*_restriction}  shows
\begin{align*}
    & \Lambda^{\ast} \left( \gamma, A^{c} \right) - \Lambda^{\ast} \left( K_{n}, A^{c} \right)  %= \lim_{R\rightarrow \infty} \mathcal{B} \left(\gamma, A^{c} ; \mathbb{B}_{R}\right) - \mathcal{B} \left( K_{n} , A^{c} ; \mathbb{B}_{R} \right)
    % \\
    % & =  \lim_{R\rightarrow \infty}\mathcal{B} \left( S_{n} , A^{c} ; \mathbb{B}_{R}  \setminus K_{n} \right) 
    =  \mathcal{B} \left( R_{n} , A^{c} ; \, \hC  \setminus K_{n} \right).
\end{align*}
Monotone convergence and the restriction invariance of the Brownian loop measure shows
\begin{align*}
    \lim_{n\rightarrow \infty }   \left(\Lambda^{\ast} \left( \gamma, A^{c} \right) - \Lambda^{\ast} \left( K_{n}, A^{c} \right) \right) = \mathcal{B} \left( \{\gamma (1)\} , A^{c} ;\, \hC  \setminus \gamma \right)=0.
\end{align*}
Similarly, $ \lim_{n\rightarrow \infty }  \Lambda^{\ast} ( f(K_{n}), f(A)^{c} )  = \Lambda^{\ast} ( f(\gamma), f(A)^{c} )$.
This completes the proof.
\end{proof}

\subsection{Hausdorff topology on the space of simple loops}\label{sec:basis}

\begin{definition}\label{def.hausdoff.metri}
    The \textit{Hausdorff distance} $d_{h}$ of two compact sets $K_{1}, K_{2} \subset \hC$ is defined as 
    \begin{align*}
        d_{h} \left( K_{1}, K_{2} \right) := \inf_{\varepsilon \geqslant 0} \left\{ K_{1} \subset \bigcup_{x\in K_{2}} \overline{B}_{\varepsilon} (x) \text{ and } K_{2} \subset \bigcup_{x\in K_{1}} \overline{B}_{\varepsilon} (x)\right\},
    \end{align*}
    where $B_{\varepsilon} (x)$ denotes the ball of radius $\varepsilon$ around $x\in \hC$ with respect to the round metric. 
\end{definition}

The space $\mathcal{C}$ of non-empty compact subsets of $\hC$ endowed with the Hausdorff distance is a compact metric space. We then endow the space of simple loops $\mathcal{SL} \subset \mathcal{C}$ with the relative topology induced by $d_{h}$.

\begin{proposition}\label{prop.basis.topology}
Every open set $O_H$ for the Hausdorff topology on $\mathcal {SL}$ is a union of admissible neighborhoods given by \eqref{eqn.admissible.neigh}. 
\end{proposition}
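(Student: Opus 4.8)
The plan is to prove the equivalent local statement: every $\beta \in O_H$ is contained in some admissible neighborhood $O_\varepsilon(\gamma)$ that is itself contained in $O_H$. Writing $O_H$ as the union over $\beta \in O_H$ of such neighborhoods then proves the proposition (the empty set being the empty union). So fix $\beta \in O_H$ and $\delta > 0$ with $\{\alpha \in \mathcal{SL} : d_h(\alpha,\beta) < \delta\} \subset O_H$. Since both the Hausdorff topology on $\mathcal{SL}$ and the family of admissible neighborhoods are preserved by M\"obius transformations of $\hC$, we may assume $\infty \notin \beta$.

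First I would build a thin topological annular neighborhood of $\beta$ with controlled cross-cuts. Let $\Omega$ and $\Omega^*$ be the two Jordan domains bounded by $\beta$; by Carath\'eodory's theorem the Riemann maps $\phi : \mathbb D \to \Omega$ and $\psi : \mathbb D^* \to \Omega^*$ extend to homeomorphisms of the closures, and let $h := \psi^{-1} \circ \phi$ be the induced homeomorphism of $S^1$. For $\rho < 1$ and $\rho^* > 1$ set $C_{\mathrm{in}} := \phi(\overline{\mathbb D_\rho})$ and $C_{\mathrm{out}} := \psi(\{|z| \ge \rho^*\})$, two disjoint nondegenerate closed topological disks, and $N := \hC \setminus (C_{\mathrm{in}} \cup C_{\mathrm{out}})$, an open topological annulus containing $\beta$ as a non-contractible loop. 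For $p = \phi(e^{i\theta}) \in \beta$, the arc $\sigma_p := \phi([\rho,1]e^{i\theta}) \cup \psi([1,\rho^*]e^{ih(\theta)})$ is a cross-cut of $N$ joining $C_{\mathrm{in}}$ to $C_{\mathrm{out}}$ through $p$. Uniform continuity of the extended maps gives $\sup_{p \in \beta} \operatorname{diam} \sigma_p \le \delta_0$ with $\delta_0 \to 0$ as $\rho \uparrow 1$, $\rho^* \downarrow 1$, and since $N \subset \bigcup_{p \in \beta} \sigma_p$ we also get $N \subset \bigcup_{p \in \beta} B_{\delta_0}(p)$. Choose $\rho, \rho^*$ so that $\delta_0 < \delta$.

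Next I would exhibit a sub-annulus of $N$ that contains $\beta$ and has the special form $f(\mathbb A_{1-\varepsilon})$ required of an admissible neighborhood. Since $N$ is a doubly connected planar domain with nondegenerate complementary components, uniformization yields a conformal map $g : \mathbb A_s \to N$. Then $g^{-1}(\beta)$ is a compact subset of $\mathbb A_s$; let $a$ and $b$ be its minimal and maximal moduli, so $s < a \le b < 1/s$. Put $f(w) := g(\sqrt{ab}\,w)$ and $r := \max(s/\sqrt{ab},\, s\sqrt{ab})$; then $r < 1$, $f$ is conformal on $\mathbb A_r$, and $\gamma := f(S^1) = g(\sqrt{ab}\,S^1)$ is an analytic loop of the required form. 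A short computation with $f(\mathbb A_{1-\varepsilon}) = g(\{\sqrt{ab}(1-\varepsilon) < |z| < \sqrt{ab}/(1-\varepsilon)\})$ shows that $\beta \subset f(\mathbb A_{1-\varepsilon}) \subset N$, with $\beta$ non-contractible in $f(\mathbb A_{1-\varepsilon})$, for every $\varepsilon$ with $1 - \sqrt{a/b} < \varepsilon < 1 - r$; this interval is nonempty because $r < \sqrt{a/b}$ always holds. Fixing such an $\varepsilon$, we get $\beta \in O_\varepsilon(\gamma)$, and since a loop non-contractible in the essential sub-annulus $f(\mathbb A_{1-\varepsilon})$ is non-contractible in $N$, also $O_\varepsilon(\gamma) \subset \{\text{non-contractible simple loops in } N\}$.

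Finally I would bound the Hausdorff distance. Let $\alpha$ be any non-contractible simple loop in $N$. By the Jordan curve theorem $\alpha$ separates $C_{\mathrm{in}}$ from $C_{\mathrm{out}}$ in $\hC$, so the connected set $\sigma_p$, which meets both $C_{\mathrm{in}}$ and $C_{\mathrm{out}}$, must intersect $\alpha$; this intersection point lies in $\alpha \subset N$ and in $\sigma_p$, hence within $\operatorname{diam} \sigma_p < \delta$ of $p$. Thus $\beta \subset \bigcup_{q \in \alpha} B_\delta(q)$, while conversely $\alpha \subset N \subset \bigcup_{p \in \beta} B_\delta(p)$, so $d_h(\alpha,\beta) < \delta$ and $\alpha \in O_H$. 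Hence $O_\varepsilon(\gamma) \subset O_H$, as needed. The hard part is the third step: forcing a neighborhood of the prescribed form $f(\mathbb A_{1-\varepsilon})$, with $f$ conformal on a \emph{symmetric} annulus, to sit inside the given thin topological annulus $N$ while still trapping the arbitrary Jordan curve $\beta$; this is what requires uniformizing $N$ and then rescaling to re-center $g^{-1}(\beta)$ inside a symmetric sub-annulus. The facts that the neighborhoods shrink and that cross-cuts are unavoidably crossed are then elementary planar topology.
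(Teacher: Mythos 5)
Your proof is correct and follows essentially the same route as the paper: sandwich $\beta$ between equipotentials of the two Riemann maps of its complementary components, observe that every non-contractible simple loop in the resulting thin topological annulus lies in $O_H$, then uniformize that annulus and fit a symmetric sub-annulus $f(\mathbb{A}_{1-\varepsilon})$ around $\beta$. The differences are only ones of detail: you justify the Hausdorff-closeness step via the cross-cuts $\sigma_p$ and the separation property of non-contractible loops, where the paper simply asserts it ("fix a small enough $\delta$ such that all non-contractible loops $\eta\subset Y_\delta$ are in $O_H$"), and your recentering $f(w)=g(\sqrt{ab}\,w)$ is not actually needed, since the symmetric annuli $\mathbb{A}_{1-\varepsilon}$ with $\varepsilon<1-r$ already exhaust $\mathbb{A}_r$ and therefore contain any compact subset of it for some admissible $\varepsilon$.
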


% \begin{proposition}\label{prop.basis.topology}
% The set of admissible neighborhoods given by \eqref{eqn.admissible.neigh} is a basis for the Hausdorff topology on $\mathcal {SL}$. 
% %Moreover, for all $\gamma \in \mathcal {SL}$, there exists a sequence of admissible neighborhoods $O_{\varepsilon_n} (\gamma_n)$ containing $\gamma$, such that $I^L(\gamma_n) \to \gamma$.
% \end{proposition}

\begin{proof}
    Let $\gamma \in O_{H}$, $\Omega$ the bounded connected component of $\mathbb C \setminus \gamma$ and $\Omega^c$ the unbounded connected component. 
    
    For this, let us denote by $\gamma^{1+\delta} : = f_+ (S_{1+\delta})$ and $\gamma^{1-\delta} = f_-(S_{1-\delta})$ two equipotentials on the two sides of $\gamma$, where $0 < \delta < 1/2$,  $S_r = \{ z \in \mathbb C \,|\, |z| = r\}$, $f_+$ is a conformal map $\mathbb D^c \to \Omega^c$ and $f_-$ a conformal map $\mathbb D \to \Omega$.  
    We write $Y_{\delta}$ for the doubly connected domain bounded by $\gamma^{1+\delta}$ and $\gamma^{1-\delta}$. 
    We now fix a small enough $\delta$ such that all non-contractible loops $\eta \subset Y_{\delta}$ are in $O_{H}$. By the uniformization theorem for doubly connected domains, there exists $0< r < 1$ and a conformal map $f: \mathbb A_{r} \rightarrow Y_{\delta}$. Since $f^{-1} (\gamma)$ is compact, there exists an $\varepsilon = \varepsilon_{\gamma} < 1 -r$ such that $A_{\varepsilon} := \mathbb A_{1-\varepsilon}$ contains $f^{-1} (\gamma)$. Since $A_{\varepsilon} \subset \mathbb A_{r}$, the set
\[
    O_{\gamma}:= \left\{\text{non-contractible simple loops in } f(A_{\varepsilon}) \right\}
\]
is an admissible neighborhood contained in $O_{H}$ and containing $\gamma$. We obtain that
$O_{H} = \bigcup_{\gamma\in O_{H}} O_{\gamma}$.
\end{proof}

\section{Onsager--Machlup functional for SLE loop measures}
The aim of this section is to prove Theorem \ref{thm.main}.  We begin with the following Lemma. 
Recall that $\mathcal{W} \left( K_1, K_2; D \right) \ge 0$ is the total mass of Werner's measure of loops in $D$ intersecting both $K_1$ and $K_2$, and $\mathbb D_{r} = \{ z \in \mathbb C \,|\, |z| < r\}$.
\begin{lemma}\label{lemma.continuity.Werner.1}
Let $K \subset \mathbb D$ be a compact set and $0 < \varepsilon \ll 1$ such that $K \subset \mathbb D_{1-\varepsilon}$. Then
\begin{equation}\label{eqn.continuity}
\lim_{\varepsilon \rightarrow 0} \mathcal{W} \left(K, \mathbb D \setminus  \mathbb D_{1-\varepsilon} ;  \mathbb D  \right) =0.
\end{equation}
\end{lemma}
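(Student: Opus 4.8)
The plan is to show that the total mass of Werner's measure of loops in $\mathbb D$ that touch both the compact set $K$ and the thin annular shell $\mathbb D\setminus\mathbb D_{1-\varepsilon}$ vanishes as $\varepsilon\to 0$, by a monotone-convergence argument. First I would observe that the family of loop sets
\[
E_\varepsilon := \left\{\text{loop }\delta\subset\mathbb D \,\middle|\, \delta\cap K\neq\emptyset,\ \delta\cap(\mathbb D\setminus\mathbb D_{1-\varepsilon})\neq\emptyset\right\}
\]
is decreasing in $\varepsilon$ (as $\varepsilon\downarrow 0$ the shell $\mathbb D\setminus\mathbb D_{1-\varepsilon}$ shrinks), so $\mathcal W(K,\mathbb D\setminus\mathbb D_{1-\varepsilon};\mathbb D)=\mu^{\BL}_{\mathbb D}(\{\delta:\partial_{z_0}\delta\cap K\neq\emptyset,\ \partial_{z_0}\delta\cap(\mathbb D\setminus\mathbb D_{1-\varepsilon})\neq\emptyset\})$ is nonincreasing. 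By the monotone convergence theorem the limit equals the Werner mass of the intersection $\bigcap_{\varepsilon>0}E_\varepsilon$. The key point is then to identify this intersection: a loop $\delta\subset\mathbb D$ lies in $\bigcap_\varepsilon E_\varepsilon$ iff it touches $K$ and it touches $\mathbb D\setminus\mathbb D_{1-\varepsilon}$ for every $\varepsilon>0$, i.e. $\overline\delta\cap S^1\neq\emptyset$. But a loop in the support of Werner's measure (equivalently, the outer boundary of a Brownian loop) contained in the open disk $\mathbb D$ almost surely does not touch $\partial\mathbb D$; more robustly, the Werner measure of loops contained in $\mathbb D$ whose closure meets $\partial\mathbb D$ is zero, since Werner's measure on loops in $\mathbb D$ is obtained by conformal image from loops in, say, $\frac12\mathbb D$-type subdomains and a fixed loop touching the boundary is a measure-zero event. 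Hence $\mathcal W(\bigcap_\varepsilon E_\varepsilon)=0$, giving \eqref{eqn.continuity}.

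In slightly more detail, I would argue the vanishing of the limiting set's mass by comparison with the already-established Lemma~\ref{lemma.werner.not.lambda}. Fix an increasing exhaustion $\mathbb D_{1-1/n}\uparrow\mathbb D$. A loop $\delta$ in $\bigcap_\varepsilon E_\varepsilon$ satisfies $\operatorname{dist}(\delta,\partial\mathbb D)=0$, so writing $\mathbb D\setminus\mathbb D_{1-\varepsilon}$ as a nested family shrinking (after a conformal change of coordinates sending $\partial\mathbb D$ to a point, or directly) one can bound
\[
\mathcal W\left(K,\mathbb D\setminus\mathbb D_{1-\varepsilon};\mathbb D\right)\le \mathcal W\left(K,\ \overline{\mathbb D}\setminus\mathbb D_{1-\varepsilon}\right),
\]
where the right side is Werner's measure (on $\hC$) of loops meeting $K$ and the closed shell; the closed shells decrease to the circle $S^1=\partial\mathbb D$, which is non-polar, so this does not immediately vanish. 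The cleaner route is to instead apply a conformal map $\psi$ of $\mathbb D$ onto itself — no, better: apply the conformal invariance of Werner's measure to transfer $\mathbb D$ to a bounded domain and $\partial\mathbb D$ to a single point. Concretely, pick any Möbius map $A$ of $\hC$ sending an interior point of $\mathbb D$ to $\infty$; then $A(\mathbb D)$ is the exterior of a disk, $A(K)$ is a compact set inside it, and $A(\mathbb D\setminus\mathbb D_{1-\varepsilon})$ is a family of ``collar'' neighborhoods of the bounding circle $A(\partial\mathbb D)$. This still has the circle, not a point, as limit — so the honest statement is that we should use that Werner's measure gives zero mass to loops (in $\mathbb D$) that touch $\partial\mathbb D$, which follows because such a loop, being the outer boundary of a Brownian loop contained in $\mathbb D$, would force the Brownian loop to touch $\partial\mathbb D$, an event of $\mu^{\BL}_{\mathbb D}$-measure zero by restriction invariance (loops in $\mathbb D$ do not touch the excluded region $\hC\setminus\mathbb D$, and its boundary has zero $\mu^{\BL}$-mass since a fixed point, hence a fixed circle swept as a limit of nested open sets, is hit with zero mass in the relevant renormalized sense).

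The main obstacle I anticipate is making the last step rigorous: a Brownian loop in $\mathbb D$ generically accumulates on $\partial\mathbb D$ has Werner-outer-boundary far from $\partial\mathbb D$, but \emph{a priori} one must rule out that a nonnegligible family of Werner loops in $\mathbb D$ has closure touching $\partial\mathbb D$. I would handle this by monotone convergence directly on $E_\varepsilon$: $\bigcap_{\varepsilon>0}E_\varepsilon=\{\delta\subset\mathbb D:\delta\cap K\neq\emptyset,\ \overline\delta\cap\partial\mathbb D\neq\emptyset\}$, and I claim $\mathcal W(\{\delta\subset\mathbb D:\overline\delta\cap\partial\mathbb D\neq\emptyset\})=0$. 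Indeed a loop in the support of $\mathcal W_{\mathbb D}$ is a.s. contained in a compact subset of $\mathbb D$ — this is exactly the content of the finiteness and restriction structure behind Werner's measure together with the fact that, under $\mu^{\BL}$, loops in $\mathbb D$ have compact closure inside $\mathbb D$ (the boundary $S^1$ being polar-complement-wise not hit, or more simply: $\mu^{\BL}_{\mathbb D}$-a.e.\ loop has positive distance from $\partial\mathbb D$ since the set of loops at distance $0$ from $\partial\mathbb D$ is the increasing union-complement of loops in $\mathbb D_{1-1/n}$, and $\mathcal W(\{\delta\subset\mathbb D:\operatorname{dist}(\delta,\partial\mathbb D)=0\})=\lim_n\mathcal W(\{\delta\subset\mathbb D\setminus\overline{\mathbb D_{1-1/n}}\text{ meets }\ldots\})$, which one checks is $0$ because such loops cannot simultaneously meet the fixed compact $K\subset\mathbb D_{1-\varepsilon_0}$). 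This last bookkeeping — relating "touches every shell" to "touches the circle" to "a null set under $\mathcal W$" — is the only genuinely delicate point; everything else is monotone convergence and conformal/restriction invariance quoted from Section~\ref{sec:BLM}.
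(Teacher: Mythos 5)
Your argument is correct in substance but takes a genuinely different route from the paper's, and your write-up buries the one observation that makes it work. The paper's proof is quantitative: it bounds $\mathcal W \le \mathcal B$ and decomposes $\mathcal B\left(K, \mathbb D\setminus\mathbb D_{1-\varepsilon};\mathbb D\right)$ via the Brownian bubble measure rooted on the circles $S_r$, $1-\varepsilon < r \le 1$; the bubble mass of bubbles hitting $K$ is uniformly bounded, and the shell has area $O(\varepsilon)$, so the whole quantity is $O(\varepsilon)$. Your route is a soft downward monotone convergence on the decreasing events $E_\varepsilon$, in the same spirit as the second half of the proof of Lemma~\ref{lemma.werner.not.lambda}. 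Two points need tightening. First, continuity from above for a $\sigma$-finite infinite measure requires $\mathcal W(E_{\varepsilon_0})<\infty$ for some $\varepsilon_0$; you never check this, but it holds since $\mathcal W(E_{\varepsilon_0})\le \mathcal W\left(K,\overline{\mathbb D}\setminus\mathbb D_{1-\varepsilon_0}\right)$, which is finite for disjoint closed sets by \cite{NacuWerner2011}. Second --- and this is where your proposal loses its way --- the limiting event is not a delicate null set but is literally empty: Werner's measure is supported on simple loops, which are compact, so $\delta\subset\mathbb D$ forces $\operatorname{dist}(\delta,\partial\mathbb D)>0$ and hence $\delta\cap(\mathbb D\setminus\mathbb D_{1-\varepsilon})=\emptyset$ once $\varepsilon<\operatorname{dist}(\delta,\partial\mathbb D)$; thus $\bigcap_{\varepsilon>0}E_\varepsilon=\emptyset$. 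Your entire discussion of whether a Werner loop in $\mathbb D$ can have closure touching $\partial\mathbb D$, the M\"obius-map detours, and the appeals to ``measure-zero events'' should be deleted --- they are not needed and, as written, contain false starts that would not survive refereeing. With those two repairs your proof is complete; the trade-off versus the paper's is that the bubble decomposition yields an explicit rate, while your argument is more elementary and uses only monotone convergence and the finiteness of $\mathcal W$ on disjoint closed sets.
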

\begin{proof}
Since Werner's measure is the measure of the outer boundary of Brownian loops,
we have that
$$ \mathcal{W} \left(K, \mathbb D \setminus  \mathbb D_{1-\varepsilon} ;  \mathbb D  \right) \le \mathcal B \left(K, \mathbb D \setminus  \mathbb D_{1-\varepsilon} ;  \mathbb D  \right).$$

  Using the decomposition of Brownian loop measure into Brownian bubble measure $m_{D,z}$ introduced in \cite{LawlerWerner2004}, see, e.g., \cite[Section 2.1.3]{LaurenceLawler2013} of Brownian bubbles in $D$ rooted at $z \in \partial D$, we have that
\begin{align*}
\mathcal B \left(K, \mathbb D \setminus  \mathbb D_{1-\varepsilon} ;  \mathbb D  \right) = \frac{1}{\pi} \int_{0}^{2\pi} \int_{1-\varepsilon}^{1} m_{\mathbb D_{r}, re^{\ii \theta}} (\{\text{bubbles intersecting } K\}) \,r dr d\theta.
\end{align*}  
Since $m_{\mathbb D_{r}, re^{\ii \theta}} (\{\text{bubbles intersecting } K\})$ is uniformly bounded for $r\in  (1-\varepsilon, 1]$ and $\theta 
\in [0,2\pi)$. The limit \eqref{eqn.continuity} follows.
\end{proof}

Fix $ 0 < r < 1$.
Recall that we write $\mathbb A_{r} : = \{ z \in \mathbb C \,|\, r < |z| < r^{-1}\}$ and $S_r := \{|z| = r\}$.

\begin{lemma}\label{lemma.continuity.Wern.measure_2}  Let $f$ be a conformal map $A : = \mathbb A_r \to \tilde A$ and $\gamma = f(S_1)$. For $\varepsilon \ll 1-r$, let $A_\varepsilon := \mathbb A_{1-\varepsilon} \subset \mathbb A_{r}$ and $\Tilde A_\varepsilon = f(A_\varepsilon)$.  Then
\begin{align*}
    \sup_{\eta \subset A_\varepsilon} \mathcal{W} \left( \eta, A^{c} \right),  \inf_{\eta \subset A_\varepsilon} \mathcal{W} \left( \eta, A^{c} \right) & \xrightarrow[]{\varepsilon \to 0\splus} \mathcal{W} \left( S_1, A^{c} \right) \\
     \sup_{\tilde \eta \subset \tilde A_\varepsilon} \mathcal{W} ( \tilde \eta, \tilde{A}^{c} ) , \inf_{\tilde \eta \subset \tilde A_\varepsilon} \mathcal{W} ( \tilde \eta, \tilde{A}^{c} ) & \xrightarrow[]{\varepsilon \to 0\splus}  \mathcal{W} ( \gamma, \tilde{A}^{c} ).
\end{align*}
Here and below, by $\eta \, \subset A_{\varepsilon}$ we always mean a non-contractible simple loop $\eta$ in $A_{\varepsilon}$.
\end{lemma}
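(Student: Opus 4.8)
The plan is to reduce all four convergence statements to Lemma~\ref{lemma.continuity.Werner.1} via conformal invariance of Werner's measure, after sandwiching the quantities $\mathcal{W}(\eta, A^c)$ between $\mathcal{W}(\gamma, A^c)$ and a controlled error that vanishes as $\varepsilon \to 0$. First I would treat the unconformed side, $\mathcal{W}(\eta, A^c)$ for $\eta \subset A_\varepsilon$. Since $\eta$ is a non-contractible simple loop in $A_\varepsilon = \mathbb{A}_{1-\varepsilon}$, it separates the two boundary components of $A = \mathbb{A}_r$; writing $\mathcal{L}(\eta, A^c)$ for the loops under Werner's measure hitting both $\eta$ and $A^c$, I would compare this family to $\mathcal{L}(S_1, A^c)$. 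The key geometric observation is that a loop in $\mathbb{C}$ whose outer boundary meets both $A^c$ (which lies outside $\mathbb{A}_r \supset A_\varepsilon$) and the loop $\eta \subset \mathbb{A}_{1-\varepsilon}$ must, by connectedness of the outer boundary, have outer boundary crossing the annular region $\mathbb{A}_r \setminus \mathbb{A}_{1-\varepsilon}$; hence its outer boundary meets $S_1$ (since $S_1$ separates $A_\varepsilon$ from $A^c$ inside $\mathbb{A}_r$) \emph{or} it meets the thin region $\overline{\mathbb{A}_{1-\varepsilon}}\setminus \mathbb{A}_{1-\varepsilon}'$ near $\eta$ itself. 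The cleanest route: use $\mathcal{L}(\eta,A^c) \subset \mathcal{L}(S_1, A^c) \cup \mathcal{L}(\eta, S_{1-\varepsilon}\cup S_{1/(1-\varepsilon)}; \mathbb{C})$, and symmetrically $\mathcal{L}(S_1, A^c)\subset \mathcal{L}(\eta,A^c)\cup \mathcal{L}(S_1, \partial A_\varepsilon; \mathbb C)$, so that
\[
\big|\mathcal{W}(\eta, A^c) - \mathcal{W}(S_1, A^c)\big| \le \mathcal{W}\big(A_\varepsilon^c \cap \mathbb{A}_r,\, S_1 ;\, \mathbb C\big) + \mathcal{W}\big(\eta,\, \partial A_\varepsilon;\, \mathbb C\big).
\]
Both error terms are dominated by the Werner mass of loops crossing the two thin annuli $\mathbb{A}_r \setminus \mathbb{A}_{1-\varepsilon}$, which is bounded by the corresponding Brownian loop mass and tends to $0$ as $\varepsilon \to 0$ by exactly the argument of Lemma~\ref{lemma.continuity.Werner.1} (applied in $\mathbb D_{1/r}$ rather than $\mathbb D$, after a M\"obius change of coordinates, or directly via the Brownian bubble decomposition over the shrinking annular collar). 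Since this bound is uniform in the choice of $\eta \subset A_\varepsilon$, taking $\sup$ and $\inf$ over such $\eta$ and letting $\varepsilon \to 0$ gives the first line.

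For the second line I would apply the conformal map $f$. Write $\tilde\eta = f(\eta)$; then $\tilde\eta \subset \tilde A_\varepsilon = f(A_\varepsilon)$ is an arbitrary non-contractible simple loop in $\tilde A_\varepsilon$, and conversely every such loop arises this way (since $f$ is a conformal equivalence of annuli, hence a homeomorphism preserving the separating property). Now I repeat the sandwich argument in the domain $\tilde A = f(\mathbb{A}_r)$: here the separating curve $\gamma = f(S_1)$ plays the role of $S_1$, and the collar regions are $f(\mathbb{A}_r \setminus \mathbb{A}_{1-\varepsilon})$. The only point requiring a little care is that the error terms are now Werner (or Brownian-loop) masses of loops crossing the conformal images $f(\mathbb{A}_r \setminus \overline{\mathbb{A}_{1-\varepsilon}})$ of thin collars; but these images are themselves thin doubly connected collars degenerating onto the analytic curves $f(S_r)$ and $f(S_{1/r})$ as $\varepsilon \to 0$, and — crucially — one can bound such a mass by pulling back through $f^{-1}$: by conformal invariance of the Brownian loop measure, $\mathcal{B}(f(V_1), f(V_2); f(D)) = \mathcal{B}(V_1, V_2; D)$ whenever all sets lie in the domain of conformality, so the error is again $\mathcal{B}\big(\partial A_\varepsilon \cap \mathbb{A}_r,\, S_1;\, \mathbb{A}_r\big) \to 0$, \emph{except} that the loops contributing to $\mathcal{W}(\tilde\eta, \tilde A^c)$ need not be contained in $f(\mathbb{A}_r)$. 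This is the main obstacle.

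To handle it I would not insist on containment: instead I'd observe that for the \emph{difference} estimate, each loop $\delta$ counted in the symmetric difference of $\mathcal{L}(\tilde\eta, \tilde A^c)$ and $\mathcal{L}(\gamma, \tilde A^c)$ has \emph{outer boundary} passing through $\tilde A \setminus \tilde A_\varepsilon'$ for a slightly larger sub-annulus, and the outer boundary is a simple loop; I can then cover the collar $\tilde A \setminus \tilde A_\varepsilon$ by finitely many coordinate disks on which $f$ and $f^{-1}$ are bi-Lipschitz with constants bounded independently of $\varepsilon$ (using that $\gamma = f(S_1)$ is analytic and $\overline{\mathbb{A}_{1-\delta_0}}$ is a fixed compact subset of $\mathbb{A}_r$ for some fixed $\delta_0$), and transfer the bubble-decomposition bound of Lemma~\ref{lemma.continuity.Werner.1} through these charts with a uniform multiplicative constant. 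Concretely: $\mathcal{W}(\tilde\eta, \tilde A^c) \le \mathcal{W}(\gamma, \tilde A^c) + C\,\mathcal{B}\big(S_{1-\varepsilon}\cup S_{1/(1-\varepsilon)},\, S_1;\, \mathbb{A}_r\big)$ and the reverse with $\tilde\eta, \gamma$ swapped, where $C$ depends only on $f$ and $r$; the Brownian loop term $\to 0$ as $\varepsilon \to 0$ by Lemma~\ref{lemma.continuity.Werner.1}'s argument (the bubble mass intersecting the fixed set $S_1$ stays uniformly bounded while we integrate over a collar of vanishing area). Taking $\sup$ and $\inf$ over $\tilde\eta \subset \tilde A_\varepsilon$ and sending $\varepsilon \to 0$ then yields the second line, completing the proof.
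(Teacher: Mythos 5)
Your overall strategy (control the symmetric difference of the two hitting events, uniformly in $\eta$, and reduce to Lemma~\ref{lemma.continuity.Werner.1}) is the right one, but the error terms you actually write down do not tend to zero, and this is a genuine gap rather than a presentational one. Consider your bound $\abs{\mathcal{W}(\eta,A^c)-\mathcal{W}(S_1,A^c)}\le \mathcal{W}(S_1,\partial A_\varepsilon;\mathbb C)+\mathcal{W}(\eta,\partial A_\varepsilon;\mathbb C)$. The sets $S_1$ and $\partial A_\varepsilon=S_{1-\varepsilon}\cup S_{1/(1-\varepsilon)}$ approach each other as $\varepsilon\to 0$, so $\mathcal{W}(S_1,\partial A_\varepsilon;\mathbb C)$ \emph{diverges}: by scale invariance, Werner loops of diameter $d$ straddling both circles contribute mass of order $1/d$ per dyadic scale $d\in[\varepsilon,1]$. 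Worse, $\sup_{\eta\subset A_\varepsilon}\mathcal{W}(\eta,\partial A_\varepsilon;\mathbb C)=+\infty$ for every fixed $\varepsilon$, since $\eta$ may hug $S_{1-\varepsilon}$. The same defect recurs in your second part: $\mathcal B(S_{1-\varepsilon}\cup S_{1/(1-\varepsilon)},S_1;\mathbb A_r)$ also blows up (bubbles rooted at distance $\varepsilon$ from $S_1$ that reach $S_1$ have mass of order $\varepsilon^{-2}$), and the collars $\mathbb A_r\setminus\mathbb A_{1-\varepsilon}$ you call ``thin'' have \emph{fixed} width and area as $\varepsilon\to0$ --- the shrinking region is $A_\varepsilon$ itself, not its complement in $\mathbb A_r$. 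So the appeal to the argument of Lemma~\ref{lemma.continuity.Werner.1} does not apply to the quantities you produced: that lemma needs one of the two sets to be a vanishing-area collar attached to the boundary of the \emph{confining domain}, and your terms impose no confinement.

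What must be retained, for each loop in the symmetric difference, is \emph{both} pieces of information: (i) the loop misses one of the two separating curves ($S_1$ or $\eta$; resp.\ $\gamma$ or $\tilde\eta$), hence, being simple, is confined to one complementary component of that curve; and (ii) within that component it travels from the thin annulus $A_\varepsilon$ (resp.\ $f(A_\varepsilon)$) all the way to $A^c$ (resp.\ $\tilde A^c$), hence crosses the entire fixed-modulus collar, hitting both $S_{1-\varepsilon}$ and $S_r$ (or $S_{1/(1-\varepsilon)}$ and $S_{1/r}$, resp.\ their $f$-images). This yields four terms, each of the form $\mathcal{W}(\text{collar shrinking onto }\partial D,\ \text{fixed compact subset of }D;\ D)$ for a \emph{simply connected} $D$ (a complementary component of $S_1$, $\eta$, $\gamma$ or $\tilde\eta$), and only then does Lemma~\ref{lemma.continuity.Werner.1} apply after uniformizing $D$ by $\mathbb D$; this is exactly the paper's decomposition \eqref{eq:4_terms}. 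It also dissolves the ``main obstacle'' you single out in the second part: one never pulls loops back through $f$, so no bi-Lipschitz chart argument is needed --- and such an argument would itself require justification, since the Brownian loop and Werner measures are conformally invariant but not quasi-invariant with controlled constants under merely bi-Lipschitz maps. Conformal invariance is only ever used for the uniformizers of the simply connected complementary components.
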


\begin{figure}
\begin{center} 
\includegraphics[width =.4\textwidth]{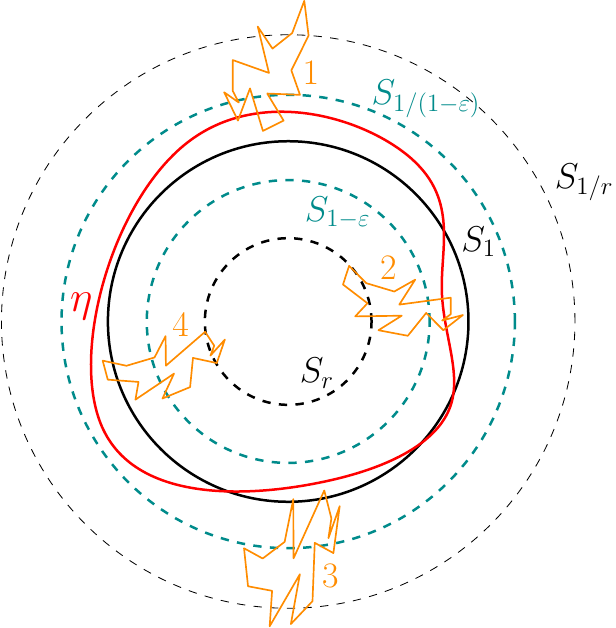}
\caption{\small The Werner's measure terms (in orange) on the right-hand-side of \eqref{eq:4_terms} numbered in order.
\label{fig_4_terms}} 
\end{center}  
\end{figure}

\begin{proof}
   To show the first limit, for $\eta \subset  A_\varepsilon$, we write $\Omega$ for the bounded connected component of $\mathbb C \setminus \eta$ and $\Omega^c$ the unbounded component. 
    We have
    \begin{align}\label{eq:4_terms}
        |\mathcal{W} \left( \eta, A^{c} \right) -  \mathcal{W} \left( S_1, A^{c} \right)| 
        \le  &  \mathcal{W} \left( \eta \cap  \mathbb D^c, S_{1/r}; \mathbb D^c \right) +  \mathcal{W} \left( \eta \cap \mathbb D,  S_{r}; \mathbb D \right) \\
          & + \mathcal{W} \left( S_1 \cap \Omega^c, S_{1/r}; \Omega^c \right) +  \mathcal{W} \left( S_1 \cap \Omega,  S_{r}; \Omega \right).  \nonumber
    \end{align}
     See Figure~\ref{fig_4_terms}.
    We can bound these terms:
    \begin{align*}
        \mathcal{W} \left( \eta \cap  \mathbb D^c, S_{1/r}; \mathbb D^c \right) &\le \mathcal{W} \left( S_{1/(1-\varepsilon)}, S_{1/r}; \mathbb D^c \right) = \mathcal{W} \left( S_{1-\varepsilon}, S_{r}; \mathbb D \right) , \\
        \mathcal{W} \left( \eta \cap \mathbb D,  S_{r}; \mathbb D \right) &\le \mathcal{W} \left( S_{1-\varepsilon}, S_{r}; \mathbb D \right),\\
         \mathcal{W} \left( S_1 \cap \Omega^c, S_{1/r}; \Omega^c \right) &\le \mathcal{W} \left( S_1, S_{1/r}; \mathbb D_{1-\varepsilon}^c \right) \le \mathcal W\left( S_{1-\varepsilon}, S_{r}; \mathbb D \right),\\
          \mathcal{W} \left( S_1 \cap \Omega,  S_{r}; \Omega \right)& \le \mathcal{W} \left( S_1,  S_{r}; \mathbb D_{1/(1-\varepsilon)} \right) \le \mathcal W \left( S_{1-\varepsilon}, S_{r}; \mathbb D \right).
    \end{align*}
We note that $ \mathcal{W} \left( S_{1-\varepsilon}, S_{r}; \mathbb D \right)  = \mathcal{W} \left( \mathbb D_r, \mathbb D \setminus \mathbb D_{1-\varepsilon}; \mathbb D \right)$.
    Using Lemma~\ref{lemma.continuity.Werner.1} and the conformal invariance of Werner's measure, we have that the four bounds above converge to $0$ as $\varepsilon \to 0$, concluding the proof of the first limit. 

The second limit follows similarly.
\end{proof}

Now, we can prove our main theorem.
    
\begin{proof}[Proof of Theorem \ref{thm.main}]
Recall that we use the notations $A:= \mathbb{A}_{r}$, $\Tilde{A} = f( \mathbb{A}_{r})$, $A_\varepsilon = \mathbb A_{1-\varepsilon}$, and $\Tilde{A}_{\varepsilon} := f (A_{\varepsilon})$,  $\gamma = f(S_1)$ and  
$$O_{\varepsilon} (\gamma ) := \left\{ \text{non-contractible simple loops in } \tilde A_{\varepsilon} \right\}.  $$
We recall that $J\left( \cdot, A^{c} \right):= \exp \left( \frac{c(\kappa)}{2}\, \Lambda^{\ast} \left( \cdot, A^{c} \right) \right)$.
By conformal restriction \eqref{eqn.conformal.restr}  and conformal invariance, we have  
\begin{align*}
& \mu^{\kappa} (O_{\varepsilon} (\gamma)) = \int 1_{\left\{ \tilde{\eta} \, \subset \tilde{A}_{\varepsilon} \right\}  } d\mu^{\kappa} ( \tilde{\eta}) = \int 1_{\left\{ \tilde{\eta} \, \subset \tilde{A}_{\varepsilon} \right\}  }  \left( \frac{d \mu^{\kappa}_{\tilde{A}} }{d \mu^{\kappa}} (\tilde{\eta} ) \right)^{-1} d \mu^{\kappa}_{\tilde{A}}( \tilde{\eta})  
\\
&=  \int    1_{\left\{ \tilde{\eta} \, \subset \tilde{A}_{\varepsilon} \right\}  }     J ( \tilde{\eta}, \tilde{A}^{c} )^{-1} d \mu^{\kappa}_{\tilde{A}}( \tilde{\eta}) = \int 1_{ \left\{ \eta \, \subset A_{\varepsilon} \right\}}    J ( f(\eta), \tilde{A}^{c} )^{-1} d \mu^{\kappa}_{A}( \eta) 
\\
& = \int 1_{ \left\{ \eta \, \subset A_{\varepsilon} \right\}}   \frac{J (\eta,A^{c})}{ J ( f(\eta), \tilde{A}^{c} )} d \mu( \eta) .
\end{align*}
By Corollary~\ref{cor.lambda.is.Werner} we have that 
\begin{align}
  \frac{J (\eta,A^{c})}{ J ( f(\eta), \tilde{A}^{c} )} &= \exp\left( \frac{c(\kappa)}{2} \, \Lambda^{\ast} \left( \eta,A^{c} \right) - \frac{c(\kappa)}{2} \, \Lambda^{\ast} ( f(\eta), \tilde{A}^{c} )  \right)  \notag
\\
&=  \exp\left( \frac{c(\kappa)}{2} \, \mathcal{W} \left( \eta, A^{c} \right) - \frac{c(\kappa)}{2}\, \mathcal{W} ( f(\eta), \tilde{A}^{c} )  \right). \label{eqn.RNderivative}
\end{align}

It follows from Lemma~\ref{lemma.continuity.Wern.measure_2} that
\begin{align*}
    \sup_{\eta \subset A_\varepsilon} \left(\mathcal{W} \left( \eta, A^{c} \right) - \mathcal{W} ( f(\eta), \tilde{A}^{c} ) \right) & \xrightarrow[]{\varepsilon \to 0\splus} \mathcal{W} \left( S_1, A^{c} \right) - \mathcal{W} ( \gamma, \tilde{A}^{c} ),\\
     \inf_{\eta \subset A_\varepsilon} \left( \mathcal{W} \left( \eta, A^{c} \right) - \mathcal{W} ( f(\eta), \tilde{A}^{c} ) \right)& \xrightarrow[]{\varepsilon \to 0\splus} \mathcal{W} \left( S_1, A^{c} \right) - \mathcal{W} ( \gamma, \tilde{A}^{c} ).
\end{align*}
By \eqref{eqn.loop.ener.measure}, the limit above equals $I^L(\gamma)/12$ since $I^L(S_1) = 0$.

Hence, we have
$$\frac{\mu^{\kappa} (O_{\varepsilon} (\gamma)) }{\mu^{\kappa} (O_{\varepsilon} (S_1)) } =  \frac{ \int 1_{ \left\{ \eta \, \subset A_{\varepsilon} \right\}}   \frac{J (\eta,A^{c})}{ J ( f(\eta), \tilde{A}^{c} )} d \mu( \eta)}{ \int 1_{ \left\{ \eta \, \subset A_{\varepsilon} \right\}}   d \mu( \eta) }  \xrightarrow[]{\varepsilon \to 0\splus} \exp \left(\frac{c(\kappa)}{24} I^L(\gamma)\right)$$
which completes the proof.
\end{proof}

\bigskip
\begin{acknowledgement}
We thank Fredrik Viklund and Pavel Wiegmann for their helpful discussions and the anonymous referees for constructive comments.  M.C. acknowledges the hospitality of IHES, where part of the work was accomplished during his visit. 
 
 This work has been supported by the NSF Grant DMS-1928930 while the authors participated in a program hosted by the Simons Laufer Mathematical Sciences Institute in Berkeley, California, during the Spring 2022 semester and by the European Union (ERC, RaConTeich, 101116694). Views and opinions expressed are however those of the authors only and do not necessarily reflect those of the European Union or the European Research Council Executive Agency. Neither the European Union nor the granting authority can be held responsible for them.
\end{acknowledgement}


\begin{thebibliography}{10}

\bibitem{Benoist_loop}
St\'{e}phane Benoist and Julien Dub\'{e}dat, \emph{An {${\rm SLE}_2$} loop
  measure}, Ann. Inst. Henri Poincar\'{e} Probab. Stat. \textbf{52} (2016),
  no.~3, 1406--1436. \MR{3531714}

\bibitem{carfagninigordina2023}
Marco Carfagnini and Maria Gordina, \emph{On the {O}nsager-{M}achlup functional
  for the {B}rownian motion on the {H}eisenberg group}, 2023.

\bibitem{ChaoDuan2019}
Ying Chao and Jinqiao Duan, \emph{The {O}nsager-{M}achlup function as
  {L}agrangian for the most probable path of a jump-diffusion process},
  Nonlinearity \textbf{32} (2019), no.~10, 3715--3741. \MR{4002397}

\bibitem{Dubedat2009partition}
Julien Dub\'{e}dat, \emph{S{LE} and the free field: partition functions and
  couplings}, J. Amer. Math. Soc. \textbf{22} (2009), no.~4, 995--1054.
  \MR{2525778}

\bibitem{LaurenceLawler2013}
Laurence~S. Field and Gregory~F. Lawler, \emph{Reversed radial {SLE} and the
  {B}rownian loop measure}, J. Stat. Phys. \textbf{150} (2013), no.~6,
  1030--1062. \MR{3038676}

\bibitem{johansson2021strong}
Kurt Johansson, \emph{Strong {S}zegő theorem on a {J}ordan curve}, Toeplitz
  operators and random matrices in memory of {H}arold {W}idom \textbf{289}
  ([2022] \copyright 2022), 427--461. \MR{4573959}

\bibitem{Kontsevich_SLE}
Maxim Kontsevich and Yuri Suhov, \emph{On {M}alliavin measures, {SLE}, and
  {CFT}}, Tr. Mat. Inst. Steklova \textbf{258} (2007), no.~Anal. i Osob. Ch. 1,
  107--153. \MR{2400527}

\bibitem{LSW_CR_chordal}
Gregory Lawler, Oded Schramm, and Wendelin Werner, \emph{Conformal restriction:
  the chordal case}, J. Amer. Math. Soc. \textbf{16} (2003), no.~4, 917--955.
  \MR{1992830}

\bibitem{LawlerWerner2004}
Gregory~F. Lawler and Wendelin Werner, \emph{The {B}rownian loop soup}, Probab.
  Theory Related Fields \textbf{128} (2004), no.~4, 565--588. \MR{2045953}

\bibitem{LeJan2006det}
Yves Le~Jan, \emph{Markov loops, determinants and gaussian fields}, arXiv
  preprint math/0612112 (2006).

\bibitem{LyonsZeitouni1999}
Terry Lyons and Ofer Zeitouni, \emph{Conditional exponential moments for
  iterated {W}iener integrals}, Ann. Probab. \textbf{27} (1999), no.~4,
  1738--1749. \MR{1742886}

\bibitem{MachlupOnsager1953b}
S.~Machlup and L.~Onsager, \emph{Fluctuations and irreversible process. {II}.
  {S}ystems with kinetic energy}, Physical Rev. (2) \textbf{91} (1953),
  1512--1515. \MR{0057766}

\bibitem{MachlupOnsager1953a}
\bysame, \emph{Fluctuations and irreversible processes}, Physical Rev. (2)
  \textbf{91} (1953), 1505--1512. \MR{0057765}

\bibitem{NacuWerner2011}
\c{S}erban Nacu and Wendelin Werner, \emph{Random soups, carpets and fractal
  dimensions}, J. Lond. Math. Soc. (2) \textbf{83} (2011), no.~3, 789--809.
  \MR{2802511}

\bibitem{RohdeWang2021}
Steffen Rohde and Yilin Wang, \emph{The {L}oewner energy of loops and
  regularity of driving functions}, Int. Math. Res. Not. IMRN (2021), no.~10,
  7715--7763. \MR{4259153}

\bibitem{schramm2000scaling}
Oded Schramm, \emph{Scaling limits of loop-erased random walks and uniform
  spanning trees}, Israel J. Math. \textbf{118} (2000), 221--288. \MR{1776084}

\bibitem{SheppZeitouni1993}
L.~A. Shepp and O.~Zeitouni, \emph{Exponential estimates for convex norms and
  some applications}, Barcelona {S}eminar on {S}tochastic {A}nalysis ({S}t.
  {F}eliu de {G}u\'{\i}xols, 1991), Progr. Probab., vol.~32, Birkh\"{a}user,
  Basel, 1993, pp.~203--215. \MR{1265050}

\bibitem{TT06}
Leon~A. Takhtajan and Lee-Peng Teo, \emph{Weil-{P}etersson metric on the
  universal {T}eichm\"{u}ller space}, Mem. Amer. Math. Soc. \textbf{183}
  (2006), no.~861, viii+119. \MR{2251887}

\bibitem{W1}
Yilin Wang, \emph{The energy of a deterministic {L}oewner chain: reversibility
  and interpretation via {${\rm SLE}_{0+}$}}, J. Eur. Math. Soc. (JEMS)
  \textbf{21} (2019), no.~7, 1915--1941. \MR{3959854}

\bibitem{W2}
\bysame, \emph{Equivalent descriptions of the {L}oewner energy}, Invent. Math.
  \textbf{218} (2019), no.~2, 573--621. \MR{4011706}

\bibitem{W3}
\bysame, \emph{A note on {L}oewner energy, conformal restriction and {W}erner's
  measure on self-avoiding loops}, Ann. Inst. Fourier (Grenoble) \textbf{71}
  (2021), no.~4, 1791--1805. \MR{4398248}

\bibitem{Wang_survey}
\bysame, \emph{Large deviations of {S}chramm-{L}oewner evolutions: a survey},
  Probab. Surv. \textbf{19} (2022), 351--403. \MR{4417203}

\bibitem{Werner2008}
Wendelin Werner, \emph{The conformally invariant measure on self-avoiding
  loops}, J. Amer. Math. Soc. \textbf{21} (2008), no.~1, 137--169. \MR{2350053}

\bibitem{Zeitouni1989}
Ofer Zeitouni, \emph{On the {O}nsager-{M}achlup functional of diffusion
  processes around non-{$C^2$}-curves}, Ann. Probab. \textbf{17} (1989), no.~3,
  1037--1054. \MR{1009443}

\bibitem{Zhan2017}
Dapeng Zhan, \emph{S{LE} loop measures}, Probab. Theory Related Fields
  \textbf{179} (2021), no.~1-2, 345--406. \MR{4221661}

\end{thebibliography}
\end{document}